\documentclass[11pt,a4paper]{article}
\pdfoutput=1

\usepackage{epsf,epsfig,amsfonts,amsgen,amsmath,amstext,amsbsy,amsopn,amsthm,cases,listings,color
}
\usepackage{ebezier,eepic}
\usepackage{color}
\usepackage{multirow}
\usepackage{epstopdf}
\usepackage{graphicx}
\usepackage{pgf,tikz}
\usepackage{mathrsfs}
\usepackage[marginal]{footmisc}
\usepackage{enumitem}
\usepackage[titletoc]{appendix}
\usepackage{booktabs}
\usepackage{url}
\usepackage{relsize}
\usepackage{mathtools}
\usepackage{comment}
\usepackage{pgfplots}
\usepackage{amssymb}
\usepackage{float}
\usepackage{wasysym}

\usepackage{empheq}
\usepackage{ifthen}

\usepackage{dsfont}

\usepackage{tikz}
\usepackage{longtable}
\usepackage{subfigure}
\pgfplotsset{compat=1.17}
\usepackage{mathrsfs}
\usepackage{wasysym}
\usetikzlibrary{arrows}
\usepackage{aligned-overset}
\usepackage{bm}
\usepackage{bbm}
\usepackage[T1]{fontenc}
\usepackage[nobysame,initials,bibtex-style]{amsrefs}

\allowdisplaybreaks[1]

\usepackage{color}

\usepackage{listings}

\usepackage{hyperref}

\newtheorem{definition}{Definition} [section]
\newtheorem{theorem}[definition]{Theorem}
\newtheorem{lemma}[definition]{Lemma}

\theoremstyle{remark}
\newtheorem{remark}[definition]{Remark}

\newcommand{\C}[1]{\mathcal{#1}}

\def\FF{\mathbb{F}}

\newcommand{\cG}[2]{{\mathcal G}^{(#1)}_{#2}}
\renewcommand{\O}[1]{\overline{#1}}
\newcommand{\p}[2]{P_{#1}(#2)}
\newcommand{\pp}[3]{P_{\,\O{#1}#2}(#3)}
\newcommand{\CI}{C}
\newcommand{\I}[1]{{\mathbbm #1}}

\newcommand{\OutIn}[2]{$\ifthenelse{\equal{#1}{}}{}{\O #1}#2$}

\newcommand{\OMerge}[2]{\ifthenelse{\equal{#1}{}}{(#2)}{(#1|#2)}}
\newcommand{\Merge}[2]{\ifthenelse{\equal{#1}{}}{#2}{#1|#2}}

\def\ex{\mathrm{ex}}

\newcommand{\hide}[1]{}

\newcommand{\bes}[1]{\pi(#1)} 

\begin{document}
\title{\bf\Large The quadratic Brown--Erd\H{o}s--S\'os problem for 3-uniform hypergraphs with 8 and 9 edges}
\date{}
\author{
Levente Bodn\'ar\footnote{The work was carried out while the author was affiliated with the University of Warwick. Email: \mbox{bodnalev@gmail.com}
}
\and
Oleg Pikhurko\footnote{Mathematics Institute and DIMAP,
University of Warwick,
Coventry CV4 7AL, UK. Email: o.pikhurko@warwick.ac.uk}
\and
Shumin Sun\footnote{Mathematics Institute,
University of Warwick,
Coventry CV4 7AL, UK. Email: shumin.sun@warwick.ac.uk}
\and
Yan Wang\footnote{School of Mathematical Sciences, Shanghai Jiao Tong University, Shanghai 200240, China. Email: yan.w@sjtu.edu.cn}
\and
Jiasheng Zeng\footnote{Department of Mathematics, Hong Kong University of Science and Technology, Clear Water Bay, Kowloon, Hong Kong. Email: jzengbl@connect.ust.hk}
}
\maketitle
\begin{abstract}
The famous and actively studied problem of  Brown--Erd\H{o}s--S\'os from 1973 asks for $f^{(r)}(n;s,k)$, the maximum number of edges in an $r$-graph with $n$ vertices in which no $s$ vertices span $k$ or more edges.

In this paper, we concentrate on the case $r=3$ and $s=k+2$, with $k\ge2$ fixed and $n\to\infty$; then it is easy to show that the extremal function grows quadratically in $n$.
Delcourt and Postle proved that the limit $\bes{k}:=\lim_{n\to\infty} f^{(3)}(n;k+2,k)/n^2$ exists for every $k$. 
While Brown, Erd\H{o}s and S\'os observed that $\bes{2}=1/6$ already in the 1970s, the value of $\bes{k}$ for $3\le k\le 7$ was determined only recently (by various subgroups of Glock, Joos, Kim, K\"uhn, Lichev, Pikhurko, and Sun). Very recently, Chao, Huang and Liu determined $\bes{k}$ for every odd $k$.

Independently of the last result, we show that $\bes{9}=1/5$. Also, we prove that $\bes{8}\le {5053}/{26544}$, which is within $0.0029$ of the best known lower bound $\bes{8}\ge 3/16$. The new upper bounds are obtained by expressing some previous arguments as a linear program and then using a computer to generate and solve its instances. Our proof of the lower bound on $\bes{9}$ is based on a finite field construction combined with existing packing results.

\hide{
We study the quadratic Brown--Erd\H{o}s--S\'os problem for $3$-uniform hypergraphs with eight and nine edges. We prove
\[
\pi(3,8)\le \frac{5053}{26544}
\qquad\text{and}\qquad
\pi(3,9)=\frac15.
\]
The upper bounds are computer-assisted. Building on the cluster-weighting method developed in~\cite{GKLPS24}, we formulate the choice of pair weights as a linear program giving the stated upper bound for the eight-edge problem. Together with a matching lower bound, we also determine the exact value in the nine-edge case.
}
\end{abstract}

\section{Introduction}

Given a family $\C F$ of \emph{$r$-graphs} (that is, $r$-uniform set systems), the \emph{Tur\'an number} of $\C F$, denoted by $\ex(n;\C F)$, is defined as the maximum number of edges in an $n$-vertex $r$-graph containing no element of $\C F$ as a subgraph.  

Brown, Erd\H{o}s and S\'os~\cite{SosErdosBrown73,71BES} initiated the systematic study of the Tur\'an numbers of the family 
$\C F^{(r)}(s,k)$, consisting of all $r$-graphs with $k$ edges and at most $s$ vertices. We denote these Tur\'an numbers by 
$$f^{(r)}(n;s,k):=\ex(n;\C F^{(r)}(s,k)).$$
For fixed integers $r\geq3$, $k\geq2$ and $r<s\leq rk$, Brown, Erd\H{o}s and S\'os~\cite{71BES} established the general bounds 
 \begin{equation}\label{eq:GeneralBounds}
\Omega(n^{(rk-s)/(k-1)})=f^{(r)}(n;s,k)=O(n^{\lceil (rk-s)/(k-1)\rceil}).
 \end{equation}
Observe that if $s=rk-tk+t$ for an integer $t$, then the two exponents match, implying that $f^{(r)}(n;rk-tk+t,k)=\Theta (n^t)$. Of course, if $s$ is less than $rk-tk+t$ then the left-hand side of~\eqref{eq:GeneralBounds} grows polynomially faster than $n^t$. On the other hand, determining the set of $s>rk-tk+t$ with $f^{(r)}(n;s,k)=\Omega(n^t)$ is a very difficult problem even for $3$-graphs. Namely,  Brown, Erd\H{o}s and S\'os conjectured that
$f^{(3)}(n;k+3,k)=o(n^2)$ for every fixed $k\geq 3$. While Ruzsa and Szemer\'edi~\cite{RuzsaSzemeredi78} proved the case $k=3$, all other cases remain open, including the case $k=4$ when we get the notoriously difficult $(7,4)$-problem.
For general $k$, S\'ark\"ozy and Selkow~\cite{SarkozySelkow05} proved that $f^{(3)}(n;k+2+\lfloor\log_2 k\rfloor,k)=o(n^2)$. Conlon, Gishboliner, Levanzov and Shapira~\cite{ConlonGishbolinerLevanzovShapira23} subsequently improved the number of vertices to $k+O(\log k/\log\log k)$ for large $k$. 

So by the \emph{quadratic case} of the Brown--Erd\H{o}s--S\'os problem we mean the case  $s=rk-2k+2$ when we know by~\eqref{eq:GeneralBounds} that $f^{(r)}(n;s,k)=\Theta (n^2)$. A natural question is whether the following limit exists:
$$\pi(r,k):=\lim_{n\to \infty}n^{-2}f^{(r)}(n;rk-2k+2,k).$$
The existence of this limit for $r=3$ was conjectured by Brown, Erd\H{o}s and S\'os~\cite{71BES}. 
This conjecture was recently proved by Delcourt and Postle~\cite{DelcourtPostle24}. Building on their approach, Shangguan~\cite{23sh} proved the existence of the limit for every $r\geq4$ and $k\geq3$. 

The existence of the limit $\pi(r,k)$ leads to the problem of determining its value. The cases $2\leq k\leq7$ have been resolved for every $r\geq3$. For $k=2$, the packing theorem of R\"odl~\cite{Rodl85} gives $\pi(r,2)=1/(r^2-r)$ for every $r\geq3$. For $k=3$, Glock~\cite{19G} determined $\pi(3,3)=1/5$, while Shangguan and Tamo~\cite{ShangguanTamo20} proved that $\pi(r,3)=1/(r^2-r-1)$ for every $r\geq4$. The cases $k=4$ and $k\in\{5,6,7\}$ were settled by
Glock, Joos, Kim, K\"uhn, Lichev and
Pikhurko~\cite{GlockJoosKimKuhnLichevPikhurko24}, and by
Glock, Kim, Lichev, Pikhurko and Sun~\cite{GKLPS24},
respectively. For $k=8$, Pikhurko and Sun~\cite{ps25} proved that $\pi(r,8)=1/(r^2-r)$ for every $r\geq4$. They also established the lower bound $\pi(3,8)\geq3/16$ and conjectured equality. For general even integers $k\geq4$, Letzter and Sgueglia~\cite{25LS} proved that $\pi(r,k)=1/(r^2-r)$ whenever $r\geq 2+\sqrt{2}\,k^{3/2}$. Wang and Zeng~\cite{WangZeng26} subsequently improved this range to
$r\geq 2+\sqrt{3k/2-4}$. As this paper was in the final stages of its preparation, a preprint by Chao, Huang and Liu~\cite{ChaoHuangLiu26} appeared which, in particular, determined all remaining values of $\pi(r,k)$ except when $r=3$ and $k\ge 8$ is even.

In this paper, we focus on the quadratic case of the Brown--Erd\H{o}s--S\'os problem for 3-graphs (that is, the case $r=3$ and $s=k+2$). We write $\bes{k}:=\pi(3,k)$ for the corresponding limiting value. The results mentioned in the previous paragraph determined the values of $\bes{k}$ for all odd $k$ and for even $k\le 6$.  

Our first result is that $\bes{9}=1/5$. It is obtained independently of~\cite{ChaoHuangLiu26} and our proof of the lower bound uses a somewhat different method (a finite field construction combined with existing packing results) than that in~\cite{ChaoHuangLiu26}.

\begin{theorem}\label{thm:9-edge}
    $\bes{9}=\frac{1}{5}$.
\end{theorem}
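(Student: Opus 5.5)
We split the statement into the upper bound $\bes{9}\le 1/5$ and the lower bound $\bes{9}\ge 1/5$.

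\emph{Upper bound.} We follow the cluster-weighting method of~\cite{GKLPS24}. Take a $3$-graph $H$ on $n$ vertices with no $11$ vertices spanning $9$ edges. Call two edges \emph{adjacent} if they share exactly two vertices. The components of the resulting adjacency graph are \emph{clusters}; since $(k+2,k)$-configurations are built up by adding edges that each contribute at most one new vertex, every cluster has at most $8$ edges. For a cluster $C$ with $e(C)$ edges and $v(C)$ vertices, let $p(C)$ be the number of pairs of vertices of $C$ that are covered by an edge of $H$. Every edge covers $3$ pairs, and pairs covered by different clusters are distinct because an edge lies in only one cluster. So it suffices to prove $e(C)\le p(C)\cdot \frac{2}{5}$, summed over all clusters, after a suitable redistribution of the ``uncovered'' pairs of $\binom{V}{2}$ among clusters. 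Then $e(H)\le \frac{2}{5}\binom n2+o(n^2)\approx n^2/5$.

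The key local inequality is that a cluster with $e$ edges spans $e+2$ vertices, and each of the $e$ edges needs charge at most $2/5$ per pair. For clusters whose pair count is small relative to $e$, such as large tight structures, we use the forbidden configuration. Any union of clusters joined through shared vertices with $j$ edges on $j+2$ vertices, $j\ge 9$, is forbidden. This bounds how many ``free'' pairs, meaning pairs inside a cluster's vertex set not covered by it, can be claimed by other clusters. I would set this up as in~\cite{GKLPS24}: assign each uncovered pair inside $V(C)$ a weight split among the clusters containing both ends, and show that each cluster type with at most $8$ edges receives enough weight. For odd $k=9$ the extremal ratio should come from the $k=3$ analogue, $\pi(3)=1/5$, via small clusters of $2$ edges on $4$ vertices needing $5$ pairs per $2$ edges. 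I would verify the finitely many cluster types by computer, solving the LP for the weights. This case check is the main obstacle: ensuring that large clusters (up to $8$ edges) together with their neighbours never beat $2/5$.

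\emph{Lower bound.} Here we need $n^2/5-o(n^2)$ edges with no $11$ vertices spanning $9$ edges. The plan is to mimic Glock's $\pi(3)=1/5$ construction. Take as the basic gadget a $4$-vertex configuration of $2$ edges, $\{a,b,c\},\{a,b,d\}$, which covers exactly $5$ pairs. Pack copies of $K_4$-type gadgets almost perfectly into $K_n$ (covering pairs) using the R\"odl nibble or a conflict-free hypergraph matching theorem (Delcourt--Postle, Glock--Joos--Kim--K\"uhn--Lichev). Forbidden $(11,9)$-configurations are excluded as ``conflicts'' of bounded size. The conflicts of the first type involve several gadgets; we must ensure each conflict family has the required degree bounds, namely $O(n^{2j-?})$ for conflicts of $j$ gadgets. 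To get the right count, I would place gadgets via a finite field construction: vertices are in $\FF_q^{2}$ or lines, and gadgets are defined algebraically so that small dense unions are rare, killing conflicts of low codegree. Verifying that every $(11,9)$-configuration formed by $2$-edge gadgets either is impossible (each gadget contributes $2$ edges and $2$ new vertices, giving ratio exactly $1$ beyond the first) or is a legitimate conflict of bounded size is the second technical step. A union of $m$ gadgets has $2m$ edges, and it becomes dangerous only when vertices coincide across gadgets beyond tree-like attachment. This is where the algebraic structure is needed to forbid cheap cycles.
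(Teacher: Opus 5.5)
\paragraph{Lower bound.} This half has a genuine gap, and it sits in the step you describe as a verification. Gluing a diamond onto a structure along a pair adds $2$ edges and at most $2$ vertices. So $m$ diamonds glued tree-like have $2m$ edges on $2m+2$ vertices, which is exactly the ratio of a $k$-configuration and rules nothing out. Concretely, suppose each diamond reserves only its five covered pairs. In a near-perfect packing the uncovered pair of a typical diamond $D_1$ is then covered by some $D_2$, whose uncovered pair is covered by some $D_3$, and so on. Then $D_1\cup\dots\cup D_4$ together with the edge of $D_5$ through the uncovered pair of $D_4$ has $9$ edges on at most $11$ vertices. In the conflict-free matching setup (degree $\Theta(n^2)$), two diamonds $D_1,D_2$ with $D_2$ covering the uncovered pair of $D_1$ lie in $\Theta(n^6)$ such $5$-element conflicts. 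This violates the codegree hypothesis, and making dense unions rare cannot help, because these unions are generic. So the $2$-claimed pair has to stay uncovered; if it belongs to one diamond only, each diamond costs $6$ pairs and you get only $1/6$.

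The missing idea is the paper's block $Q_t=\{L_i(t)x_t,L_i(t)y_t:i\in[d]\}$, in which the uncovered private pair $x_ty_t$ is shared by $d$ diamonds and so costs $1/d$ per diamond. For $k=3$ this alone gives $1/5$, since Theorem~\ref{thm:highgirth} then reserves only $P_{\le 1}$. For $k=9$ it reserves $P_{\le 4}$, which contains the $4\binom{d}{2}$ crossing pairs between $L_i(t)$ and $L_j(t)$, because these are $4$-claimed. The finite-field step exists to make these pairs cheap. Using the subspaces $K_{i,a}\le\mathbb{F}_q^{2d}$ and their quotient maps, each map $t\mapsto L_i(t)$ is injective, while the crossing pairs of all $N=q^{2d}$ blocks lie in one graph $\Gamma$ with $|E(\Gamma)|\le 4\binom{d}{2}q^{2d-1}=o(N)$. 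Counting core vertices ($m$ distinct pairs in $A_{i,0}\times A_{i,1}$ use at least $\min\{u+v:uv\ge m\}$ vertices) and private vertices shows that $F_{d,q}$ is $\cG{3}{9}$-free with $|P_{\le 4}(F_{d,q})|\le(5d+1)N+|E(\Gamma)|$. Theorem~\ref{thm:highgirth} then gives $\bes{9}\ge\frac{dq}{(5d+1)q+4\binom{d}{2}}$, which tends to $\frac15$ as $q\to\infty$ and then $d\to\infty$.

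\paragraph{Upper bound.} This is the same kind of argument as the paper's: a cluster-weighting LP over finitely many cluster types, solved by computer. As written, though, it is only a plan, and your clustering is too fine for it to succeed.

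First, the inequality $e(C)\le\frac25p(C)$ already fails for the fan $\{abc,abd,abe\}$ when $cd,ce,de$ are uncovered ($p=7$), so everything rests on the redistribution you leave unspecified.

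Second, with clusters equal to diamond-components (only the rule $1|1$) and weights depending on truncated claim sets, the LP provably cannot reach $t=\frac52$:
\begin{itemize}
\item A single edge forces $3w_{\{0,1\}}\ge t$.
\item The sequences $(\{0,1\},\{0,2,3\},\{0,2,3\})$ and $(\{0,1\},\{0,3\},\{0,3\})$ have the $\{1|1\}$-property: only one set contains $1$, and all sums are at most $7$. Hence $w_{\{0,1\}}+2w_{\{0,2,3\}}\le 1$ and $w_{\{0,1\}}+2w_{\{0,3\}}\le 1$.
\item The path cluster $\{abc,bcd,cde\}$ has seven pairs with claim set $\{0,1,2,3\}$, the pairs $ad,be$ with claim set $\{0,2,3\}$, and the pair $ae$ with claim set $\{0,3\}$.
\end{itemize}
Combining these gives $3t\le 7+2w_{\{0,2,3\}}+w_{\{0,3\}}\le\frac{17}{2}-\frac{t}{2}$, that is, $t\le\frac{17}{7}$.

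The paper avoids this by also merging along $1|2$ (an edge covering a pair that another cluster $2$-claims), so that $\{0,1\}$ and $\{0,2,3\}$ can never meet at one pair. It also first reduces to $\cG{3}{9}$-free graphs via Lemma~\ref{lem:ub}, which your sketch omits. With $\mathcal R_9=\{1|1,1|2\}$, the generated family of $9224$ cluster types admits an exact feasible solution with $t=\frac52$, and that certificate is the actual content of the upper bound.
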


The smallest $k$ for which the value of $\bes{k}$ is not known is $k=8$. The general upper bound of Delcourt and Postle~\cite{DelcourtPostle24} gives $\bes{8}\leq7/30$ while the newer results of Chao, Huang and Liu~\cite[Theorems 2.3 and 2.4]{ChaoHuangLiu26} imply that $\bes{8}\le 1/5$. Our second result improves this upper bound.

\begin{theorem}\label{thm:8-edge-ub}
    $\bes{8}\le \frac{5053}{26544}=0.19036...$\,.
\end{theorem}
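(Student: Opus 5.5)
The plan is to follow the cluster-weighting method of~\cite{GKLPS24}, but to choose the pair weights by a linear program rather than by hand. Let $H$ be an $n$-vertex $3$-graph with no $8$ edges spanning at most $10$ vertices; we may call such an $H$ \emph{$(10,8)$-free}. We must show $|H|\le (5053/26544+o(1))n^2$.

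\textbf{Step 1: decompose into clusters.} First, pass to a subgraph that loses only $o(n^2)$ edges and in which every configuration with few vertices has bounded size. Then split $H$ into \emph{clusters}: the components of the auxiliary graph on edges in which two edges are adjacent when they share two vertices (or, more generally, when they lie in a common small dense configuration). The key counting fact is that a cluster $C$ with $j$ edges spans at most $j+2$ vertices. Hence any $8$ edges drawn from a union of clusters that together span few vertices would violate the hypothesis. Combined with the $(10,8)$-freeness, this leaves only finitely many isomorphism types of clusters, each with at most $7$ edges.

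\textbf{Step 2: set up the weighting.} Each cluster $C$ covers a set $\partial C$ of pairs, and distinct clusters cover disjoint pairs up to $o(n^2)$ exceptions. The basic count $|H|\le \sum_C |C|$ together with $\sum_C|\partial C|\le\binom n2$ gives $|H|\le \max_C(|C|/|\partial C|)\cdot n^2/2$. This crude bound is too weak, because some clusters have a favourable ratio. To fix this, pairs not covered by any cluster, or covered in a ``wasteful'' way, get redistributed: for each cluster type and each local configuration of neighbouring clusters (for example, two clusters sharing a vertex and forcing some uncovered pairs nearby), we introduce variables describing how much weight of uncovered pairs is charged to that cluster.

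\textbf{Step 3: the linear program.} The forbidden configurations forbid certain combinations. For instance, two large clusters plus one extra edge whose union has $8$ edges on $10$ vertices cannot occur. Each such prohibition yields a linear constraint on the charging rules. The target is a bound of the form
\[
|C|\le \lambda\bigl(|\partial C|+\textstyle\sum (\text{charged weight})\bigr)\quad\text{for every cluster type }C,
\]
which summed over all clusters gives $|H|\le \lambda n^2/2+o(n^2)$. We enumerate by computer all cluster types with at most $7$ edges and all relevant local neighbourhoods, then minimise $\lambda$ by LP. The optimum is expected to be $\lambda=2\cdot 5053/26544$, and an exact rational dual certificate verifies the claim.

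\textbf{Main obstacle.} The hardest part is Step 3's modelling: finding the right set of local configurations to charge over, rich enough that the LP value drops below $1/5$ yet finite and provably exhaustive. I would first try the two-cluster interactions from~\cite{GKLPS24} (clusters sharing a vertex, with their uncovered incident pairs), then add three-cluster patterns until the LP stabilises. I would also check each generated constraint by computer against $(10,8)$-freeness.
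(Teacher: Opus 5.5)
Your outline has the right shape: clean up the host graph, partition into clusters, weight pairs, and solve an LP by computer. However, the part that constitutes the proof is left open, as your ``main obstacle'' paragraph concedes. Step~3 never defines the LP. You do not say what the variables are, which ``local configurations of neighbouring clusters'' are enumerated, why that list is finite and exhaustive, or why the charging gives each pair total weight at most $1$. So ``the optimum is expected to be $2\cdot 5053/26544$'' is just the statement to be proved.

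The missing idea is that no enumeration of neighbourhoods is needed. Your Step~1 should be exactly the Delcourt--Postle reduction to $\mathcal{G}^{(3)}_8$-free $G$ (Lemma~\ref{lem:ub}). After it, let each final cluster $F$ give weight $w_S$ to \emph{every} pair $xy\in\binom{V(F)}{2}$, where $S=C^{<8}_F(xy)$ is the truncated claim set. Then impose $\sum_i w_{S_i}\le 1$ for every \emph{admissible} sequence $S_1,\dots,S_m$, i.e.\ one that could occur at a single pair. Admissibility is governed by just two facts:
\begin{enumerate}
\item final clusters are pairwise non-mergeable;
\item $8\notin S_1+\cdots+S_m$, since otherwise the witnesses through $xy$ in the edge-disjoint clusters would give $8$ edges on at most $10$ vertices.
\end{enumerate}
Setting $w_S=0$ whenever the constant sequence $(S,\dots,S)$ of length $8$ is admissible bounds all multiplicities, which leaves finitely many pair constraints. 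Your ``redistribution of uncovered pairs'' is then automatic: pairs of $V(F)$ outside the shadow of $F$ simply receive $w_S$ with $1\notin S$.

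The cluster notion also determines the number. Your diamond-components are the closure under the single rule $1|1$, which is why you get at most $7$ edges per cluster. The paper merges under $\{1|1,1|2,2|2\}$. For example, a part containing an edge through $uv$ is fused with a part containing a diamond whose four vertices include $u,v$. Such clusters need not be diamond-connected, so your $7$-edge argument no longer applies. The computer-generated family containing all clusters has $5683$ types with up to $21$ vertices, and its finiteness is itself a computational fact that already fails if $1|3$ is added.

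The value $t=13272/5053$, giving $\pi(3,8)\le 1/(2t)=5053/26544$, is the optimum of that specific LP over that specific family. It is certified by an explicit exact rational feasible weighting. Nothing in your plan pins down an LP with this optimum, so as written it is a research programme rather than a proof of the stated bound.
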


Note that this bound exceeds the lower bound $3/16$ by at most $0.0029$, supporting the conjecture from~\cite{ps25} that $\bes{8}=3/16$.
The upper bounds in both theorems are obtained by generating and solving LPs (linear programs) using a computer. It is possible that the LP approach to $\bes{8}$ gives better bounds, perhaps even $3/16$, but we had to restrict our search to LPs that could be generated and solved with the available computational resources.

\hide{
\medskip 
\noindent \textbf{Added note.} As this paper was in the final stages of its preparation, a new preprint by Chao, Huang and Liu~\cite{ChaoHuangLiu26} appeared that, in particular, completely supersedes our Theorem~\ref{thm:9-edge}. Since our lower bound in Theorem~\ref{thm:9-edge} is proved by a method different from that in~\cite{ChaoHuangLiu26} and the results in the new preprint do not imply the upper bound in Theorem~\ref{thm:8-edge-ub}, we decided to keep both theorems.
}

\medskip
\noindent\textbf{Organisation.} The rest of this paper is organised as follows. We introduce some necessary definitions and notation in Section~\ref{se:prelim}. 
Section~\ref{se:lower} is devoted to the lower bound in Theorem~\ref{thm:9-edge}. The proofs of the upper bounds in Theorems~\ref{thm:9-edge} and~\ref{thm:8-edge-ub} can be found in Section~\ref{se:upper}. These rely on computer calculations; details on their implementation can be found in 
Appendix~\ref{se:appendix-computation}.
The SageMath code and certificates can be found in the ancillary folder on arXiv. 

\section{Preliminaries}\label{se:prelim}
Throughout the paper, we use the following notation and terminology. Let $\mathbb{N}$ denote the set of positive integers. For $n\in \mathbb{N}$, we denote by $[n]$ the set $\{1,\dots,n\}$. For a set $X$ and an integer $s \ge 1$, we let 
\[\binom{X}{s}:=\{Y\subseteq X: |Y|=s\}\] 
be the family of all $s$-element subsets of $X$. We will often write an unordered pair $\{x,y\}$ (resp.\ triple $\{x,y,z\}$) simply as $xy$ (resp.\ as $xyz$).  

When some definition or result involving $r$-graphs is equally easy to state for general $r$ as for $r=3$, we usually give  the general form.
When it is notationally convenient, we may identify an $r$-graph $H$ with its set of edges. If we specify only the edge set $E(H)$, then the vertex set is assumed to be the union of these edges, that is, $V(H):=\bigcup_{X\in E(H)} X$. Let $|H|$ be the number of edges of $H$ and $v(H)$ be the number of vertices of~$H$.
For $r$-graphs $F$ and $H$, their \emph{union} $F\cup H$ and \emph{difference} $F\setminus H$ have edge sets $E(F)\cup E(H)$ and $E(F)\setminus E(H)$ respectively. 

A \emph{diamond} consists of two edges whose symmetric difference has 2 elements.
For positive integers $s$ and $k$, an \emph{$(s,k)$-configuration} is an $r$-graph with $k$ edges and at most $s$ vertices, that is, an element of $\mathcal{F}^{(r)}(s,k)$. An $r$-graph is called \emph{$(s,k)$-free} if it contains no $(s,k)$-configuration. 
We adopt the following terminology:
\begin{itemize}
    \item If $s=rk-2k+2$, we simply refer to an $(s,k)$-configuration as a \emph{$k$-configuration}.
    \item If $s=rk-2k+1$, we say that it is a \emph{$k^-$-configuration}.
\end{itemize}

Let us define another $r$-graph family $\mathcal{G}^{(r)}_{k}$ which plays a central role in our proofs:
\begin{equation}\label{eq:cG}
\mathcal{G}^{(r)}_{k}:=\mathcal{F}^{(r)}(rk-2k+2,k) \cup\left(\bigcup_{\ell=2}^{k-1}\mathcal{F}^{(r)}(r\ell-2\ell+1,\ell)\right).
\end{equation}
Thus, $\mathcal{G}^{(r)}_{k}$ is the family of all $k$-configurations, as well as all analogous $r$-graphs of smaller sizes $\ell < k$ that are ``denser'' (that is, they are $\ell^-$-configurations). The family $\mathcal{G}^{(r)}_{k}$ will be considered for both the lower and the upper bounds.

We adopt the following definitions used in both~\cite{GKLPS24} and~\cite{ps25}. For an $r$-graph $G$, a pair $xy$ of distinct vertices (not necessarily in ${V(G)\choose 2}$) and $A\subseteq\mathbb N\cup\{0\}$, we say that $G$ \emph{$A$-claims} the pair $xy$ if, for every $i\in A$, there are $i$ distinct edges $e_1,\dots,e_i\in E(G)$ such that $|\{x,y\}\cup (\bigcup_{j=1}^i e_j)|\leq ri-2i+2$. In particular, if $xy\in {V(G)\choose 2}$, this is equivalent to the existence, for every positive $i\in A$, of an $i$-configuration $J\subseteq G$, possibly with isolated vertices, such that $\{x,y\}\subseteq V(J)$. Let $\p{A}{G}$ be the set of all pairs in ${V(G)\choose 2}$ that are $A$-claimed by~$G$.  If $A=\{i\}$ is a singleton, we simply write \emph{$i$-claims} instead of $\{i\}$-claims, and abbreviate  $\p{\{i\}}{G}$ to  $\p{i}{G}$.
For $i=1$, $\p{1}{G}$ is the usual \emph{$2$-shadow} of $G$ consisting of all pairs $uv$ of vertices such that there exists some edge $e\in E(G)$ with $u,v\in e$.
Let $\CI_{G}(xy)$ be the set of those $i\geq 0$ such that the pair $xy$ is $i$-claimed by $G$, that is,
 \begin{equation*}
 \textstyle
 \CI_{G}(xy):=\left\{ i\geq 0: \exists\mbox{ distinct $e_1,\dots,e_i\in G$ with }\big|\{x,y\}\cup (\bigcup_{j=1}^i e_j)\big|\leq ri-2i+2\right\}.
 \end{equation*}
\hide{More generally, for disjoint subsets $A,B\subseteq \I N$, we say that $G$ \emph{\OutIn{A}{B}-claims} a pair $xy$ if $A\cap \CI_{G}(xy)=\varnothing$ and $B\subseteq \CI_{G}(xy)$. For simplicity, we often omit curly brackets. For example, when $A=\{1\}$ and $B=\{i\}$ we just say \emph{\OutIn{1}{i}-claims}; also, we  let $\pp{1}{i}{G}:=\p{i}{G}\setminus \p{1}{G}$ denote the set of pairs in ${V(G)\choose 2}$ that are \OutIn{1}{i}-claimed by~$G$, and similarly let $\pp{12}{i}{G}:=\p{i}{G}\setminus (\p{1}{G}\cup \p{2}{G})$.
}

\section{Lower bounds}\label{se:lower}
For the lower bounds, we rely on the following result by Glock, Joos, Kim, K\"{u}hn, Lichev and Pikhurko~\cite{GlockJoosKimKuhnLichevPikhurko24} that allows us to build relatively dense $r$-graphs $G$ containing no $k$-configuration from a fixed $\mathcal{G}^{(r)}_{k}$-free $r$-graph $F$.

\begin{theorem}[\cite{GlockJoosKimKuhnLichevPikhurko24}*{Theorem 3.1}]\label{thm:highgirth}
Fix $k\geq 2$ and $r\geq 3$. Let $F$ be a non-empty $\cG{r}{k}$-free $r$-graph. 
Then,
$$\liminf_{n\rightarrow \infty}\frac{f^{(r)}(n;rk-2k+2,k)}{n^2}\geq \frac{|F|}{2\,|\p{\le \lfloor k/2\rfloor}{F}|},$$
where we define $P_{\le t}(F):=\{xy\in {V(F)\choose 2}: \CI_{F}(xy)\cap [t]\not=\varnothing\}$ to consist of all pairs $xy$ of vertices of $F$ such that $\CI_{F}(xy)$ contains some $i$ with $1\leq i\leq t$.
\end{theorem}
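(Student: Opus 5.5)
The plan is to build, for every large $n$, a $k$-configuration-free $r$-graph on $n$ vertices from many edge-disjoint (in the $2$-shadow sense) copies of suitably transformed images of $F$, using a random greedy or nibble-type packing with conflicts. Concretely, I would consider the auxiliary hypergraph $\mathcal{H}$ whose vertex set is $\binom{[n]}{2}$ and whose edges are the sets $\phi(P_{\le\lfloor k/2\rfloor}(F))$ for injections $\phi:V(F)\to[n]$. Each such edge has size $|P_{\le \lfloor k/2\rfloor}(F)|$. A near-perfect matching in $\mathcal{H}$ has about $\binom n2/|P_{\le\lfloor k/2\rfloor}(F)|$ edges. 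Taking the union of the corresponding copies $\phi(F)$ yields about $|F|n^2/(2|P_{\le \lfloor k/2\rfloor}(F)|)$ edges, which matches the claimed bound.

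The matching must, however, avoid \emph{conflicts}. A conflict is a small set of copies $\phi_1(F),\dots,\phi_m(F)$ whose union contains a $k$-configuration that is not contained in a single copy. Copies themselves contain no $k$-configuration, since $F$ is $\mathcal{G}^{(r)}_k$-free. So I would invoke a conflict-free hypergraph matching theorem (Glock--Joos--Kim--K\"uhn--Lichev, or Delcourt--Postle). Its hypotheses are degree and codegree regularity of $\mathcal{H}$, which is immediate by symmetry, and upper bounds on the number of conflicts of each size containing a given copy. The conflict count is governed by how many vertices a union of copies can share. Since $n$ vertices are available, a conflict of $m$ copies spanning $v$ free vertices occurs about $n^{v}$ times, and we need this to be $O(d^{m-1})$, where $d=\Theta(n^{v(F)-2})$ is the degree.

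The key combinatorial step, and the one I expect to be the main obstacle, is showing that every minimal $k$-configuration $J$ in a union of copies, with edges from at least two copies, either forces two copies to share a pair of $P_{\le\lfloor k/2\rfloor}$ (excluded by the matching) or has few enough vertices to be a rare conflict. I would decompose $J=J_1\cup\dots\cup J_m$ by copy and use the $\mathcal{G}^{(r)}_k$-freeness of $F$: each $J_i$ with $\ell_i<k$ edges has at least $(r-2)\ell_i+2$ vertices. Gluing pieces along shared vertices, the count $v(J)\le (r-2)k+2$ forces two pieces to overlap in at least two vertices. Those two vertices form a pair that is $\ell_i$-claimed by $J_i$. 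Taking the smaller piece, of size at most $\lfloor k/2\rfloor$ (one of two complementary parts has at most $k/2$ edges), that pair lies in $P_{\le\lfloor k/2\rfloor}$ of a copy. It therefore also lies in the corresponding claimed set of the other copy, contradicting disjointness.

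When more than two copies are involved, or the overlaps are single vertices, I would handle the situation by induction on the number of pieces, bounding the excess vertices. Configurations that are not excluded this way are counted as genuine conflicts of bounded size, and I would verify that they satisfy the required degree bounds. Applying the matching theorem then gives the $\liminf$ bound.
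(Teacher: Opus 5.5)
Your overall plan (pack copies of $J=P_{\le\lfloor k/2\rfloor}(F)$ by a conflict-free hypergraph matching, then place $F$ on each copy) is the strategy the paper sketches. However, the step that is supposed to rule out two-copy configurations is false.

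\textbf{The two-copy step.} Suppose a $k$-configuration splits into tight pieces $J_1,J_2$ from two copies meeting in a pair $xy$. Then $xy$ is $\ell_1$-claimed by the first copy and $(k-\ell_1)$-claimed by the second. The smaller claim puts $xy$ into the first copy of $J$. But when $k-\ell_1>\lfloor k/2\rfloor$, nothing puts $xy$ into the second copy of $J$. So your sentence ``it therefore also lies in the corresponding claimed set of the other copy'' does not follow, and edge-disjointness gives no contradiction.

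A concrete counterexample: take $k=r=3$ and $F=\{123,124\}$, so $P_{\le 1}(F)$ is $K_4$ minus the $2$-claimed pair $34$. Consider the copies $\{xyz,xyw\}$ and $\{abx,abz\}$, where the second maps $3\mapsto x$ and $4\mapsto z$. Their images of $P_{\le1}(F)$ are $\{xy,xz,yz,xw,yw\}$ and $\{ab,ax,bx,az,bz\}$, which are edge-disjoint. Yet $xyz,abx,abz$ are three edges on five vertices.

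Why this is fatal:
\begin{itemize}
\item For a fixed first copy there are $\Theta(n^2)=\Theta(d)$ such second copies, so these are two-element conflicts of degree comparable to $d$.
\item The Glock--Joos--Kim--K\"uhn--Lichev theorem admits only conflicts with at least three elements. Two-element conflicts can be discarded afterwards only when their degree is $o(d)$.
\item In a random-like almost perfect matching, a typical copy would have $\Theta(1)$ such partners.
\end{itemize}

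So you need an extra device preventing a pair used by one copy of $J$ from being a pair that another copy claims only with more than $\lfloor k/2\rfloor$ edges. For instance, split $E(K_n)$ at random into a dense part and a sparse part of density $p\to0$. Use only copies that map $J$ into the dense part and map every pair $uv$ with $\emptyset\ne C_F(uv)\cap[k-1]\subseteq\{\lfloor k/2\rfloor+1,\dots,k-1\}$ into the sparse part. Then two tight pieces sharing a pair are excluded in every case: by the matching, by the colours, or by $2(\lfloor k/2\rfloor+1)>k$. The cost is only a $p$-fraction of the pairs.

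\textbf{The codegree conditions.} Second, you only address the degree condition. That condition in fact holds automatically for every multi-copy configuration, by exactly your vertex count: each piece with $\ell_i<k$ edges has at least $(r-2)\ell_i+2$ vertices. The binding hypotheses are the codegree bounds $\Delta_{j'}(\mathcal C^{(j)})\le d^{j-j'-\varepsilon}$ for $j'\ge 2$.

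These fail if conflicts are only sets of copies containing a $k$-configuration. Take $k=5$ and $F$ a single edge, so $J$ is a triangle. Four copies forming the Pasch configuration $123,145,246,356$, together with any fifth copy through the cross pair $16$, give five edges on seven vertices. The four Pasch copies extend in $\Theta(n)=\Theta(d)$ ways.

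The usual remedy has two parts:
\begin{itemize}
\item Declare as a conflict every set of copies whose union contains an $\ell$-configuration using edges of each of them, for every $\ell\le k$. These satisfy the same degree count.
\item Work with minimal conflicts. The vertex count then shows that if, given $j'\ge 2$ copies of a conflict, the remaining ones can be chosen in more than $O(d^{j-j'}/n)$ ways, the pieces in the given copies already form a smaller conflict.
\end{itemize}
Your ``induction on the number of pieces'' does not supply this.

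Also, the claim that two pieces meeting in two vertices make that pair $\ell_i$-claimed by $J_i$ is valid only after you first use the count to reduce to tight pieces meeting in exactly one pair. Non-tight configurations are precisely the low-degree ones.
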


At a high level, the strategy to prove Theorem~\ref{thm:highgirth} is to find an almost perfect edge-packing of $J := \p{\le \lfloor k/2\rfloor}{F}$ in the $n$-vertex clique, and subsequently place a copy of $F$ onto each $J$. Since each copy of $F$ contains no $k$-configuration, one only needs to prevent $k$-configurations arising from at least two different copies of $J$. To achieve this, the packing has to be chosen carefully, using the general theory of conflict-free hypergraph matchings developed independently by Glock, Joos, Kim, K\"{u}hn and Lichev~\cite{24GJKKL} as well as Delcourt and Postle~\cite{DelcourtPostle26}.

We remark that Theorem~\ref{thm:highgirth} was utilised to verify the existence of the limit $\pi(r,k)$ in~\cite{DelcourtPostle24, 23sh}, and to establish lower bounds in~\cite{GlockJoosKimKuhnLichevPikhurko24,GKLPS24,ps25,ChaoHuangLiu26}. In this paper, we will also employ this theorem to derive the lower bound in Theorem~\ref{thm:9-edge}.

Before giving the details, we briefly mention some ideas of the proof. The basic block used in the proof is the following $3$-graph $Q$. Given $d$ pairwise disjoint pairs $L_1,\ldots,L_d$, which we call \textit{core pairs}, we let $x,y$ be two additional vertices called \emph{private}, and consider the 3-graph
\[
Q:=\{L_ix,L_i y:i\in[d]\},
\]
 where $L_i x$ denotes the triple $L_i\cup\{x\}$, and similarly for $L_i y$. Thus the $3$-graph $Q$ consists of $d$ diamonds sharing the same two vertices $x,y$. Although the pair $xy$ contributes $1$ to $|P_{\le 4}(Q)|$ (in fact, $xy$ is already in $P_{\le 2}(Q)$), it is in $d$ diamonds of $Q$ so its contribution per diamond is only $1/d$ (which will be negligible as we will let $d\to \infty$). Of course, the \emph{crossing pairs} of $Q$, that is, those connecting $L_i$ to $L_j$ for $1\le i<j\le d$, are also in $P_{\le 4}(Q)$. We would like to take different copies of $Q$ with each $Q$ having its own private pair so that each crossing pair appears on average in many blocks. For this, we construct a `sparse' graph $\Gamma$ and a collection $M$ of $d$-tuples of disjoint core pairs, and then place one copy of the block $Q$ (with two new private vertices) on each element of $M$ to obtain a $3$-graph $F_{d,q}$. The construction of $\Gamma$ and $M$ uses a finite field with $q$ elements.
 The proof then follows from two claims: $F_{d,q}$ is $\cG{3}{9}$-free, and $|\p{\le 4}{F_{d,q}}|\le(5d+1)N+|E(\Gamma)|$, where $N$ is the number of blocks. 
 We then apply Theorem~\ref{thm:highgirth} for fixed $d$ with $q\to\infty$. Finally, we let $d\to\infty$.

We now construct the sparse support graph and the collection $M$ of $d$-tuples of core pairs. Note that in the definition of a matching in Item~\ref{it:0} below, while we require that the pairs $L_i$ and $L_i'$ are not equal, we allow $L_i\cap L_i'$ to be non-empty
(and, in fact, there must be many such intersections).  

\begin{lemma}\label{lem:k9-support}
Fix an integer $d\ge 3$. For every prime power $q$, there exist pairwise disjoint sets $A_{i,a}$ for $(i,a)\in[d]\times \{0,1\}$, each of size $q^d$, a graph $\Gamma$ on $\bigcup_{i\in[d]}(A_{i,0}\cup A_{i,1})$, and a set $M\subseteq\C L_1\times\cdots\times\C L_d$, where
\[
\C L_i:= \left\{\{u,v\}:u\in A_{i,0},\ v\in A_{i,1}\right\},\quad \text{for $i \in [d]$},
\]
with the following properties.
\begin{enumerate}[label=(\roman*)]
	\item\label{it:0} The set $M$ is a \emph{matching}, that is, for all distinct $d$-tuples $(L_1,\dots,L_d),(L_1',\dots,L_d')\in M$ and every $i\in [d]$ it holds that $L_i\not=L_i'$.
    \item\label{it:1} $|M|=q^{2d}$.
    \item\label{it:2} If $(L_1,\ldots,L_d)\in M$, then $\{uv:u\in L_i,\ v\in L_j\}\subseteq E(\Gamma)$ for every $1\le i<j\le d$.
    \item\label{it:3} $|E(\Gamma)|\le 4\binom{d}{2}q^{2d-1}$.
\end{enumerate}
\end{lemma}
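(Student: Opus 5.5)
Identify each $A_{i,a}$ with a copy of $\FF_q^d$, written $\{(i,a)\}\times\FF_q^d$, and index $M$ by pairs $(s,t)\in\FF_q^d\times\FF_q^d$; there are $q^{2d}$ such pairs, which gives \ref{it:1} at once. For $(s,t)$ we choose the $i$-th core pair as $L_i(s,t)=\{u_i,v_i\}$ with $u_i\in A_{i,0}$ and $v_i\in A_{i,1}$, where each of $u_i,v_i$ is given by a linear (affine) map $\FF_q^{2d}\to\FF_q^d$. The whole design problem is then to choose $2d$ maps $\phi_{i,0},\phi_{i,1}$ with two properties. For \ref{it:0}, the combined map $(s,t)\mapsto(\phi_{i,0}(s,t),\phi_{i,1}(s,t))\in\FF_q^{2d}$ must be injective for each $i$, so that $L_i$ determines $(s,t)$. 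For \ref{it:3}, every pair of maps $\phi_{i,a},\phi_{j,b}$ with $i\ne j$ must have a joint image of size at most $q^{2d-1}$.

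Given such maps, we define $\Gamma$ as the union, over all $i<j$ and $a,b\in\{0,1\}$, of the edge sets
\[
E_{i,a,j,b}=\{\phi_{i,a}(s,t)\,\phi_{j,b}(s,t) : (s,t)\in\FF_q^{2d}\}.
\]
Then \ref{it:2} holds by construction. Each set $E_{i,a,j,b}$ is the image of the linear map $(\phi_{i,a},\phi_{j,b})\colon\FF_q^{2d}\to\FF_q^{2d}$. If this map has nontrivial kernel, its image has size at most $q^{2d-1}$. Summing over the $4\binom d2$ choices of $(i,a,j,b)$ gives \ref{it:3}.

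It remains to pick the maps. Each $\phi_{i,a}$ is a $d\times 2d$ matrix over $\FF_q$, and we need two conditions.
\begin{itemize}
\item[(a)] For every $i$, the $2d\times 2d$ matrix stacking $\phi_{i,0}$ on $\phi_{i,1}$ is invertible.
\item[(b)] For every $i\ne j$, the matrix stacking $\phi_{i,a}$ on $\phi_{j,b}$ is singular.
\end{itemize}
A concrete choice I would try first is to identify $\FF_q^d$ with $\FF_{q^d}$ and write the variables as $s,t\in\FF_{q^d}$. Set
\[
\phi_{i,0}(s,t)=s+\lambda_i t,\qquad \phi_{i,1}(s,t)=s+\mu_i t .
\]
Here the $\lambda_i,\mu_i$ must be chosen so that $\lambda_i\ne\mu_i$; then each pair is invertible and (a) holds. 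But with all $2d$ scalars distinct, two cross forms $s+\lambda t$ and $s+\mu t$ are again independent over $\FF_{q^d}$, so (b) fails. To fix this, I would make the cross pairs dependent over $\FF_q$ without being dependent over $\FF_{q^d}$. For example, take $\phi_{i,a}(s,t)=s+\tau(\lambda_{i,a}t)$ for a suitable $\FF_q$-linear map $\tau$, such as a map of rank $d-1$ added to a fixed part, so that $\phi_{i,a}-\phi_{j,b}$ is not injective. At the same time we arrange that $\phi_{i,0}-\phi_{i,1}$ is invertible, which gives (a) because the stacked matrix is equivalent to $[\phi_{i,0};\phi_{i,0}-\phi_{i,1}]$.

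A cleaner formulation is the following. Fix $\phi_{i,0}=\phi_{i,1}+D_i$, with every $\phi_{\cdot,\cdot}$ of the form $s+B t$. Then the stacked matrix for two maps with matrices $B,B'$ is equivalent to $[I,B;0,B'-B]$. So the whole condition becomes a requirement on the $2d$ matrices $B_{i,a}\in\FF_q^{d\times d}$:
\begin{itemize}
\item $B_{i,0}-B_{i,1}$ is invertible for each $i$;
\item $B_{i,a}-B_{j,b}$ is singular whenever $i\ne j$.
\end{itemize}
Take $B_{i,0}=\mathrm{diag}(0,\dots,0,1,0,\dots)$-type matrices, more precisely $B_{i,0}=E_i$ (the projection onto coordinate $i$) and $B_{i,1}=E_i-I$. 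Then $B_{i,0}-B_{i,1}=I$ is invertible. For $i\ne j$, each cross difference $E_i-E_j$, $E_i-E_j+I$, $E_i-E_j-I$ and $E_i-I-E_j+I$ has a zero or repeated-cancellation diagonal entry, and checking singularity of these diagonal matrices is immediate. This is where I expect the only real work, namely finding matrices satisfying both conditions, and verifying that $d\ge3$ suffices (possibly adjusting entries when $q=2$, e.g.\ using $E_i-I$ versus $E_i+I$). Everything else is counting.
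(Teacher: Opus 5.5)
Your final construction, $\phi_{i,a}(s,t)=s+B_{i,a}t$ with $B_{i,0}=E_i$ and $B_{i,1}=E_i-I$, is correct and is essentially the paper's argument. The paper uses quotient maps $\FF_q^{2d}\to\FF_q^{2d}/K_{i,a}$ for coordinatewise subspaces $K_{i,a}$, and your kernels $\ker\phi_{i,a}$ have the same coordinatewise structure. Your $B_{i,0}-B_{i,1}=I$ plays the role of $K_{i,0}\cap K_{i,1}=\{0\}$, and the zero diagonal entries of the cross differences play the role of $K_{i,a}\cap K_{j,b}\neq\{0\}$: for $E_i-E_j$ the zero is at some $h\notin\{i,j\}$, which is exactly where $d\ge3$ is used, while $E_i-E_j+I$ vanishes at $j$ and $E_i-E_j-I$ at $i$. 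No adjustment is needed for $q=2$, since invertibility of $I$ and singularity of these diagonal matrices hold in every characteristic, and the $\FF_{q^d}$ and $\tau$ detours can simply be deleted.
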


\begin{proof}
Let $V:=\FF_q^{2d}$ and fix a basis $e_1,\ldots,e_d,f_1,\ldots,f_d$ of $V$. For each $i\in[d]$, define two $d$-dimensional subspaces of $V$:
$$
\begin{aligned}
K_{i,0}&:=\langle e_i\rangle+\langle f_j:j\in[d]\setminus\{i\}\rangle,\\
K_{i,1}&:=\langle f_i\rangle+\langle e_j:j\in[d]\setminus\{i\}\rangle.
\end{aligned}
$$

For each $i\in[d]$ and $a\in\{0,1\}$, let $A_{i,a}$ be a copy of the quotient space $V/K_{i,a}$. We assume that all the sets $A_{i,a}$ are pairwise disjoint and write $\pi_{i,a}:V\to A_{i,a}$ for the quotient map. For every $i\in[d]$, we have $K_{i,0}\cap K_{i,1}=\{0\}$. Indeed, a vector in $K_{i,0}$ has possible $e$-coordinate only in the direction $e_i$ and possible $f$-coordinates only in the directions $f_j$ with $j\ne i$, while a vector in $K_{i,1}$ has possible $f$-coordinate only in the direction $f_i$ and possible $e$-coordinates only in the directions $e_j$ with $j\ne i$. Hence no non-zero vector can occur in both subspaces.

For each $t\in V$ and $i\in[d]$, we define the pair 
\[
L_i(t):=\left\{\pi_{i,0}(t),\pi_{i,1}(t)\right\}.
\]
Define 
 \[
 M:=\{(L_1(t),\ldots,L_d(t)):t\in V\}\subseteq \C L_1\times\cdots\times\C L_d.
 \]
 Since $K_{i,0}\cap K_{i,1}=\{0\}$, the linear map $t\mapsto (\pi_{i,0}(t),\pi_{i,1}(t))$ is injective. Since we assumed that $A_{i,0}\cap A_{i,1}=\emptyset$, the map $t\to \{\pi_{i,0}(t),\pi_{i,1}(t)\}$ is also injective. 
 Thus, for each $i$, the pairs $L_i(t)$, $t\in V$, are all distinct and  $M$ is a matching. Clearly, $|M|=|V|=q^{2d}$. This shows Items~\ref{it:0} and~\ref{it:1}.

Next, we define the support graph. For every $1\le i<j\le d$, every $a,b\in\{0,1\}$, and every $t\in V$, put the pair $\{\pi_{i,a}(t),\pi_{j,b}(t)\}$ into $E(\Gamma)$. By construction, if $(L_1(t),\ldots,L_d(t))\in M$, then $\{uv:u\in L_i(t),\ v\in L_j(t)\}\subseteq E(\Gamma)$ for every $1\le i<j\le d$. This proves Item~\ref{it:2}.

We now bound $|E(\Gamma)|$. Fix $i<j$ and $a,b\in\{0,1\}$. The number of distinct pairs of the form $\{\pi_{i,a}(t),\pi_{j,b}(t)\}$, where $t\in V$, is the size of the image of the linear map $t\mapsto(\pi_{i,a}(t),\pi_{j,b}(t))$. Its kernel is $K_{i,a}\cap K_{j,b}$, and this intersection contains a non-zero vector whenever $i\ne j$. Indeed, if $a=b=0$, then, since $d\ge3$, there exists $h\in[d]\setminus\{i,j\}$, and $f_h\in K_{i,0}\cap K_{j,0}$. Similarly, if $a=b=1$, then $e_h\in K_{i,1}\cap K_{j,1}$. If $(a,b)=(0,1)$, then $e_i\in K_{i,0}\cap K_{j,1}$, while if $(a,b)=(1,0)$, then $f_i\in K_{i,1}\cap K_{j,0}$.

Thus $\dim(K_{i,a}\cap K_{j,b})\ge1$, so the image of the linear map $t\mapsto(\pi_{i,a}(t),\pi_{j,b}(t))$ has size at most $q^{2d-1}$. Summing over the $4\binom{d}{2}$ choices of $(i,j,a,b)$ gives $|E(\Gamma)|\le4\binom{d}{2}q^{2d-1}$.
\end{proof}

Fix an integer $d\ge3$ and a prime power $q$, and let $\Gamma$ and $M$ be the graph and matching obtained by Lemma~\ref{lem:k9-support}, with $V:=\FF_q^{2d}$. Write $N:=|M|=q^{2d}$. Then $|E(\Gamma)|\le4\binom{d}{2}q^{2d-1}=o(N)$ for fixed $d$ as $q\to\infty$.

For every matching element $(L_1(t),\ldots,L_d(t))\in M$, where $t\in V$, introduce two new private vertices $x_t,y_t$. These private vertices are distinct for different $t$ and are disjoint from all vertices of the support graph. Define the \emph{block}
$$
Q_t:=\{L_i(t)x_t,L_i(t)y_t:i\in[d]\},\quad\text{for $t\in V$}.
$$
 Recall that e.g.\ $L_i(t)x_t$ is a shorthand for the triple $L_i(t)\cup\{x_t\}$. Finally, let
$$
F_{d,q}:=\bigcup_{t\in V}Q_t.
$$
Then $|F_{d,q}|=2dN$.

\begin{lemma}\label{lem:k9-free}
The $3$-graph $F_{d,q}$ is $\cG{3}{9}$-free.
\end{lemma}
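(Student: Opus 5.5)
The plan is a deficiency count that exploits the rigid local structure of $F_{d,q}$. Since $r=3$, the statement requires two things. First, every subgraph $J\subseteq F_{d,q}$ with $2\le \ell:=|J|\le 8$ edges must have $v(J)\ge \ell+2$. Second, every $J$ with $9$ edges must have $v(J)\ge 12$. Define the \emph{excess} $\mathrm{ex}(J):=v(J)-\ell$; the goal is $\mathrm{ex}(J)\ge 2$ for $2\le\ell\le 8$ and $\mathrm{ex}(J)\ge 3$ for $\ell=9$.

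\textbf{Local structure.} Every edge of $F_{d,q}$ has the form $L_i(t)x_t$ or $L_i(t)y_t$. It therefore contains exactly one private vertex and one core pair. By Item~\ref{it:0} of Lemma~\ref{lem:k9-support}, each core pair lies in at most two edges, namely the two edges of the diamond it spans in its block $Q_t$. Pairs with different indices $i\neq j$ live in the disjoint sets $A_{i,0}\cup A_{i,1}$ and $A_{j,0}\cup A_{j,1}$, so they are vertex-disjoint. Consider the \emph{core graph} $G_J$, whose edges are the core pairs used by $J$. Each connected component of $G_J$ then uses a single index $i$ and is bipartite between $A_{i,0}$ and $A_{i,1}$. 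Within one block the pairs $L_1(t),\dots,L_d(t)$ are pairwise disjoint, so a block meets each component of $G_J$ in at most one pair. Finally, two distinct pairs $L_i(t)$ and $L_i(s)$ share at most one vertex. They share the vertex in $A_{i,a}$ exactly when $t-s\in K_{i,a}$.

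\textbf{Counting.} Let $\beta$ be the number of blocks met by $J$, let $p\le 2\beta$ be the number of private vertices, and let $e$ be the number of core pairs used. Write $\ell=\sum_L w(L)$ with $w(L)\in\{1,2\}$. Then
\[
\mathrm{ex}(J)=p+\sum_C v(C)-\sum_L w(L).
\]
A pair with $w(L)=2$ forces both private vertices of its block into $J$. Hence each block contributes at least $p_t-\#\{L\subseteq Q_t: w(L)=2\}\ge 0$ in a suitably charged sense, with the worst case $p_t=2$. For each component $C$ we use $v(C)\ge e(C)+1-\mathrm{cyc}(C)$, where $\mathrm{cyc}(C)$ is the cycle rank. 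I would then form the auxiliary bipartite incidence graph $B$ between the blocks met by $J$ and the components of $G_J$, with one edge for each core pair. Summing the contributions, the excess becomes roughly
\[
\#\text{blocks}+\#\text{components}-\#\text{pairs}-\sum_C \mathrm{cyc}(C)+(\text{gain from blocks with } p_t=2 \text{ but few doubled pairs}),
\]
that is, the Euler characteristic of $B$ minus the cycles inside the core components. So the task reduces to showing that short cycles of $B$, and short cycles inside core components, cost many edges relative to the excess they destroy.

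\textbf{Linear algebra.} Short cycles correspond to linear relations among the block labels $t$. A cycle of $B$ through blocks $t_1,\dots,t_m$ requires each consecutive difference $t_{j+1}-t_j$ to lie in some $K_{i,a}$, and the differences must sum to zero. Two facts drive this step. The first is $K_{i,0}\cap K_{i,1}=\{0\}$. The second is the explicit coordinate description: $K_{i,0}$ forbids the coordinates $e_j$ for $j\ne i$ and $f_i$, and symmetrically for $K_{i,1}$. Together they give the constraints on which short cycles can occur. An example is two blocks sharing core vertices in two different indices. Then $t-s\in K_{i,a}\cap K_{j,b}$, which is possible but pins $t-s$ to a subspace of small codimension, so sharing in many indices at once is impossible. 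A $4$-cycle inside one index-$i$ component is also possible, since $A_{i,0}\times A_{i,1}$ is entirely covered by index-$i$ core pairs, but it uses $4$ blocks and hence at least $4$ private vertices. In every such configuration the extra private vertices should compensate for the lost core vertices.

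\textbf{Main obstacle.} I expect the hardest step to be the finite case analysis for $\ell\le 9$: ruling out every configuration with small excess, in particular the tight case of $9$ edges on $11$ vertices. I would first organise the analysis by the number of blocks $\beta$, using $\ell\le 2\sum_t m_t$ and that blocks with $p_t=2$ contribute a full extra vertex. I would then treat $\beta\le 4$ by hand using the intersection patterns of the subspaces above. For larger $\beta$, each block contributes few edges, so the tree-like count via $B$ already gives excess at least $3$. If the hand analysis becomes unwieldy, the fallback is to enumerate the possible intersection patterns of the $K_{i,a}$ by computer.
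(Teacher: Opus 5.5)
\paragraph{There is a genuine gap: the proposal never proves the inequality that makes up the lemma.}
Your structural observations are correct: each component of the core graph lives in one coordinate, a block meets each component in at most one pair, and $L_i(t),L_i(t')$ share at most one vertex. But the decisive step, $v(J)\ge \ell+2$ for $\ell\le 8$ and $v(J)\ge 12$ for $\ell=9$, is deferred to a case analysis you expect to be hard, with a computer fallback. The claim that large $\beta$ is settled by a ``tree-like count'' is asserted rather than shown.

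Two of the ingredients you intend to use are also wrong as stated.
\begin{enumerate}[label=(\arabic*)]
\item The per-block charge $p_t-\#\{L\subseteq Q_t:w(L)=2\}\ge 0$ fails for a block with three doubled pairs, where it equals $-1$. The exact identity is $\mathrm{ex}(J)=p-e_2+\#C-\sum_C\mathrm{cyc}(C)$, with $e_2$ the number of doubled pairs and $\#C$ the number of core components. So negative block terms have to be paid for by the component count, and this coupling between blocks and components is precisely what you leave open.
\item Sharing in many coordinates at once is not impossible. We have $\bigcap_{i\in I}K_{i,0}=\langle f_h:h\notin I\rangle\neq\{0\}$ whenever $|I|\le d-1$, so the blocks $t$ and $t+f_d$ share their $A_{i,0}$-vertex in every coordinate $i\ne d$. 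A case analysis resting on your claimed restriction would be unsound.
\end{enumerate}

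\paragraph{The missing idea is that no linear algebra is needed.}
The lemma uses only that $t\mapsto L_i(t)$ is injective and that the sets $A_{i,a}$ are pairwise disjoint. For a set $S$ of $h$ edges, let $m_i$ be the number of blocks active in coordinate $i$, let $c=\sum_i m_i$ be the number of active indices $(t,i)$, let $a=h-c\le c$ be the number of doubled ones, and let $s$ be the number of active coordinates.
\begin{enumerate}[label=(\alph*)]
\item The $m_i$ core pairs in coordinate $i$ are distinct, so they span at least $g(m_i)=\min\{u+v:uv\ge m_i\}$ vertices of $A_{i,0}\cup A_{i,1}$, and these counts add over $i$. This bound already covers the worst cases ($4$-cycles, $K_{3,3}$) without tracking cycles.
\item The busiest coordinate alone forces $m_{\max}$ distinct blocks, hence $m_{\max}$ private vertices.
\item A block carries at most $s$ doubled indices, so at least $\lceil a/s\rceil$ blocks contribute their second private vertex.
\end{enumerate}
Thus $v(S)\ge\sum_i g(m_i)+m_{\max}+\lceil a/s\rceil$. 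Minimising this over $c$ and the partitions $(m_i)$ of $c$ is a small finite computation. It gives at least $h+2$ for $3\le h\le 8$ and at least $12$ for $h=9$.

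This is the paper's proof. The crude quantity $m_{\max}$ is what decouples the private-vertex count from the core structure, and it replaces the block--component incidence analysis you did not complete.
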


\begin{proof}
Since two distinct triples span at least four vertices, it suffices to prove that every set of $h$ edges of $F_{d,q}$ spans at least $h+2$ vertices for $3\le h\le8$, and every set of nine edges spans at least $12$ vertices.

Let $S\subseteq E(F_{d,q})$ be a set of $h$ edges, where $3\le h\le9$. For $t\in V$ and $i\in[d]$, call the index $(t,i)$ \textit{active} if $S$ contains at least one of the two edges $L_i(t)x_t$ and $L_i(t)y_t$, and call it \textit{double active} if $S$ contains both edges. Let $c$ be the number of active indices and $a$ the number of double active indices. Then $h=c+a$, since $a$ of the $c$ active indices
contribute two edges to $S$ and the remaining $c-a$ contribute
one edge each.

For each $i\in[d]$, set $T_i:=\{t\in V:(t,i)\text{ is active}\}$ and $m_i:=|T_i|$. Let $I:=\{i\in[d]:m_i>0\}$, $s:=|I|$, and $m_{\max}:=\max_{i\in I}m_i$. Thus $\sum_{i\in I}m_i=c$.

We first lower-bound the number of core vertices used by $S$. For a positive integer $m\le9$, define $g(m):=\min\{u+v:u,v\in\mathbb N,\ uv\ge m\}$. We compute $g(m)$ for $m \in [9]$:
$$
\begin{array}{c|ccccccccc}
m&1&2&3&4&5&6&7&8&9\\
\hline
g(m)&2&3&4&4&5&5&6&6&6
\end{array}
$$
Fix $i\in I$. Since the map $t\mapsto L_i(t)$ is injective, the $m_i$ pairs $L_i(t)$ with $t\in T_i$ are distinct pairs between $A_{i,0}$ and $A_{i,1}$. If their endpoints occupy $u_i$ vertices in $A_{i,0}$ and $v_i$ vertices in $A_{i,1}$, then $m_i\le u_iv_i$. Hence these $m_i$ pairs use at least $g(m_i)$ core vertices. Since the sets $A_{i,a}$ are pairwise disjoint for different $(i,a)$, the total number of core vertices used by $S$ is at least $\sum_{i\in I}g(m_i)$.

We next lower-bound the number of private vertices used by $S$. Call a block $Q_t$ \emph{active} if it contains at least one edge from $S$. Let $B:=\{t\in V:S\cap Q_t\ne\emptyset\}$ be the set of active blocks. Since some coordinate $i$ satisfies $m_i=m_{\max}$, there are at least $m_{\max}$ active blocks, so $|B|\ge m_{\max}$. Let $B_2$ be the set of blocks containing at least one double active index. Every block can contain double active indices only in its active coordinates, at most $s$ of them; so $|B_2|\ge\lceil a/s\rceil$. Each active block contributes at least one of its two private vertices $x_t,y_t$, and every block in $B_2$ contributes both private vertices. Therefore $v(S)\ge\sum_{i\in I}g(m_i)+m_{\max}+\lceil a/s\rceil$.

It remains to minimize this lower bound for $h\le9$. For fixed $c$ and $a$, define
$$
\nu(c,a):=
\min_{\substack{s\ge1,\ m_1,\ldots,m_s\ge1\\
                m_1+\cdots+m_s=c}}
\left(
   \sum_{\rho=1}^s g(m_\rho)
   +\max_{\rho\in[s]}m_\rho
   +\left\lceil\frac as\right\rceil
\right).
$$
This is a relaxation of the actual situation, since we do not impose $s\le d$, and therefore it still gives a valid lower bound. Since $a=h-c$ and $a\le c$, the possible values of $c$ are $\lceil h/2\rceil,\lceil h/2\rceil+1,\ldots,h$. Substituting the displayed values of $g$, we obtain the following table (the values of the second column are listed in this order of $c$):
$$
\begin{array}{c|c|c}
h&
\{\nu(c,h-c):\lceil h/2\rceil\le c\le h\}&
\min_{\lceil h/2\rceil\le c\le h}\nu(c,h-c)\\
\hline
3&6,\ 7&6\\
4&6,\ 8,\ 8&6\\
5&8,\ 9,\ 10&8\\
6&8,\ 9,\ 11,\ 11&8\\
7&10,\ 11,\ 12,\ 12&10\\
8&10,\ 11,\ 12,\ 13,\ 12&10\\
9&12,\ 12,\ 13,\ 13,\ 14&12
\end{array}
$$

Consequently, if $3\le h\le8$, then $v(S)\ge h+2>h+1$, so $S$ is not an $h^-$-configuration. If $h=9$, then $v(S)\ge12>11$, so $S$ is not a $9$-configuration. Thus $F_{d,q}$ contains no $\ell^-$-configuration for $3\le\ell\le8$ and no $9$-configuration. Hence $F_{d,q}$ is $\cG{3}{9}$-free.
\end{proof}

\begin{lemma}\label{lem:k9-claimed-pairs}
The claimed-pair set of $F_{d,q}$ satisfies $|\p{\le 4}{F_{d,q}}|\le(5d+1)N+|E(\Gamma)|$.
\end{lemma}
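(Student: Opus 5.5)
The plan is to reduce the statement to a classification of ``tight'' edge sets. Suppose a pair $xy\in\binom{V(F_{d,q})}{2}$ is $i$-claimed for some $1\le i\le 4$, witnessed by a set $S$ of $i$ edges with $|\{x,y\}\cup V(S)|\le i+2$. I will first show that $S$ itself spans at least $i+2$ vertices. For $i=1$ this holds because a triple has $3$ vertices. For $i=2$ it holds because two distinct triples span at least $4$ vertices. For $i=3,4$ it is the content of the proof of Lemma~\ref{lem:k9-free}, where every $h$ edges with $3\le h\le 8$ span at least $h+2$ vertices. Hence $v(S)=i+2$ exactly, and $x,y\in V(S)$. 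So every pair in $\p{\le 4}{F_{d,q}}$ lies inside $V(S)$ for some set $S$ of $i\le 4$ edges spanning exactly $i+2$ vertices. It therefore suffices to list all such tight sets $S$.

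I will go through the cases $i=1,\dots,4$.
\begin{itemize}
\item For $i=1$, $S$ is a single edge $L_j(t)x_t$ or $L_j(t)y_t$. This contributes the core pair $L_j(t)$ and the pairs joining $x_t$ or $y_t$ to the two vertices of $L_j(t)$.
\item For $i=2$, $S$ must be a diamond. Distinct blocks share no private vertices. Within one block the core pairs $L_j(t)$ for different $j$ are disjoint, since they lie in different sets $A_{j,a}$. Two distinct edges of different blocks with the same index $j$ share at most one core vertex, because $t\mapsto L_j(t)$ is injective. So the only diamonds are $\{L_j(t)x_t,L_j(t)y_t\}$, and they contribute just the private pair $x_ty_t$.
\item For $i=3$, the table in the proof of Lemma~\ref{lem:k9-free} gives $v(S)\ge 6>5$, so there are no tight sets.
\item For $i=4$, the minimum $6$ must be attained. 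I will go back to the bound $\sum_{\rho} g(m_\rho)+m_{\max}+\lceil a/s\rceil$ and determine its equality cases. Only $c=2$, $a=2$ can reach $6$: the values for $c=3,4$ are $8$. With $s=1$ the bound is $g(2)+2+2=7$, so we need $s=2$ and $m_1=m_2=1$.
\end{itemize}

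In the $i=4$ equality case, $m_{\max}=1$ forces a single active block $t$. Then $S=\{L_j(t)x_t,L_j(t)y_t,L_k(t)x_t,L_k(t)y_t\}$ with $j\ne k$, spanning $L_j(t)\cup L_k(t)\cup\{x_t,y_t\}$. The only new pairs here are the crossing pairs between $L_j(t)$ and $L_k(t)$, and by Lemma~\ref{lem:k9-support}\ref{it:2} these lie in $E(\Gamma)$.

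Collecting the cases, each block $Q_t$ contributes at most $d$ core pairs, $4d$ private--core pairs and one private pair, that is, $5d+1$ pairs. All other claimed pairs lie in $E(\Gamma)$. Summing over the $N$ blocks gives $|\p{\le 4}{F_{d,q}}|\le (5d+1)N+|E(\Gamma)|$.

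The main obstacle I expect is the equality analysis for $i=4$. The table in the proof of Lemma~\ref{lem:k9-free} only records minima, so I must recheck that every sub-case of the relaxation $\nu(c,a)$ other than $(c,a,s)=(2,2,2)$ strictly exceeds $6$. I must also confirm that equality in the count of core vertices (namely $g(1)=2$ for each index) and of private vertices really forces the configuration to sit in a single block.
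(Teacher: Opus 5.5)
Your proposal is correct and follows essentially the same route as the paper. Both arguments show that a witness has exactly $i+2$ vertices, identify the only diamonds as $\{L_j(t)x_t,L_j(t)y_t\}$, rule out $i=3$, and reduce $i=4$ to the equality case $(c,a)=(2,2)$ with two distinct coordinates in a single block; both then count $5d+1$ pairs per block plus $E(\Gamma)$.

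One small wording fix: $m_{\max}=1$ by itself does not force a single active block. What forces it is equality in the private-vertex bound $|B|\ge m_{\max}$. Equivalently, two double-active indices lying in different blocks would already use four private vertices, giving $v(S)\ge 8$.
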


\begin{proof}
Set $R:=\p{1}{F_{d,q}}\cup\{x_ty_t:t\in V\}\cup E(\Gamma)$. We first show that $\p{\le 4}{F_{d,q}}\subseteq R$.
Now let $uv\in\p{\le 4}{F_{d,q}}$. Then for some $p\in\{1,2,3,4\}$ there are $p$ distinct edges $e_1,\ldots,e_p\in E(F_{d,q})$ such that 
\begin{equation}\label{eq:cl-pair}
\left|\{u,v\}\cup\bigcup_{\alpha=1}^p e_\alpha\right|\le p+2.
\end{equation}
We distinguish the four possible values of $p$.

If $p=1$, then the right-hand side of~(\ref{eq:cl-pair}) is equal to $3$, so both $u$ and $v$ lie in $e_1$. Thus $uv\in\p{1}{F_{d,q}}\subseteq R$.

Suppose $p=2$. Note that two distinct triples have union of size at least $4$.
Hence $|e_1\cup e_2|=4$ and $\{u,v\}\subseteq e_1\cup e_2$. Thus $e_1$ and $e_2$ share exactly two vertices. In $F_{d,q}$ this can happen only when, for some $t\in V$ and $i\in[d]$, $\{e_1,e_2\}=\{L_i(t)x_t,L_i(t)y_t\}$. Indeed, two edges can share two core vertices only if they contain the same core pair $L_i(t)$, and the injectivity of $t\mapsto L_i(t)$ then forces the same $t$ and the same $i$; the distinctness of the two edges implies that the private vertices must be $x_t$ and $y_t$. Also, two distinct edges cannot share one private vertex and one core vertex: the private vertex determines the block $t$, and inside one block the different core pairs $L_i(t)$ are disjoint. Therefore $e_1\cup e_2=L_i(t)\cup\{x_t,y_t\}$. Every pair inside this $4$-set is in the usual $2$-shadow or is the private pair $\{x_t,y_t\}$. Hence $uv\in R$.

Suppose $p=3$. The proof of Lemma~\ref{lem:k9-free} shows that any three edges of $F_{d,q}$ span at least $6$ vertices, whereas the right-hand side of~(\ref{eq:cl-pair}) is $5$. So this case cannot occur.

It remains to consider $p=4$. Lemma~\ref{lem:k9-free} shows that any four edges of $F_{d,q}$ span at least $6$ vertices. Since the right-hand side of~(\ref{eq:cl-pair}) is now $6$, we have $\left|\cup_{\alpha=1}^4e_\alpha\right|=6$ and $\{u,v\}\subseteq\bigcup_{\alpha=1}^4e_\alpha$. We claim that the four edges must be two complete diamonds in one block.

Use the notation from the proof of Lemma~\ref{lem:k9-free} for $S:=\{e_1,e_2,e_3,e_4\}$. Thus $c$ is the number of active indices and $a$ is the number of double active indices. Since $4=c+a$ and $a\le c$, the only possibilities are $(c,a)=(4,0),(3,1),(2,2)$. If $(c,a)=(4,0)$ or $(3,1)$, then the table in the proof of Lemma~\ref{lem:k9-free} shows that we have at least $8$ vertices. Hence equality with $6$ vertices is possible only when $(c,a)=(2,2)$.

In this case both active indices are double. If the two active indices have the same coordinate, then the lower bound is at least $g(2)+2+2=7$, which is impossible. Therefore they have two different coordinates. Since the total number of vertices is exactly $6$, equality must also hold in the private-vertex lower bound, so there is only one active block. Thus there exist $t\in V$ and distinct $i,j\in[d]$ such that
$$
\{e_1,e_2,e_3,e_4\}=\{L_i(t)x_t,L_i(t)y_t,L_j(t)x_t,L_j(t)y_t\}.
$$
Consequently, $\cup_{\alpha=1}^4e_\alpha=L_i(t)\cup L_j(t)\cup\{x_t,y_t\}$. All pairs inside this $6$-set are of one of the following types: usual $2$-shadow pairs, the private pair $x_ty_t$, or cross-pairs between $L_i(t)$ and $L_j(t)$. The first type lies in $\p{1}{F_{d,q}}$, the second lies in $\{x_ty_t:t\in V\}$, and the cross-pairs lie in $E(\Gamma)$ by Lemma~\ref{lem:k9-support}. Hence $uv\in R$.

We have shown that $\p{\le 4}{F_{d,q}}\subseteq R$.
Note that $|\p{1}{F_{d,q}}|\le5dN$. Indeed, for fixed $t\in V$ and $i\in[d]$, the two edges $L_i(t)x_t$ and $L_i(t)y_t$ contribute to the usual $2$-shadow only the following five pairs: the core pair $L_i(t)$, the two pairs joining $x_t$ to the vertices of $L_i(t)$, and the two pairs joining $y_t$ to the vertices of $L_i(t)$. Summing over all $t\in V$ and $i\in[d]$ gives at most $5dN$ pairs.
Therefore
$$
\begin{aligned}
|\p{\le 4}{F_{d,q}}|
&\le|\p{1}{F_{d,q}}|+\bigl|\{x_ty_t:t\in V\}\bigr|+|E(\Gamma)| \le(5d+1)N+|E(\Gamma)|.
\end{aligned}
$$
This finishes the proof of the lemma.\end{proof}

We are now ready to prove the lower bound in Theorem~\ref{thm:9-edge}.

\begin{proof}[Proof of the lower bound in Theorem~\ref{thm:9-edge}]
Fix $d\ge3$ and let $q$ be an arbitrary prime power. By Lemmas~\ref{lem:k9-free} and~\ref{lem:k9-claimed-pairs}, the $3$-graph $F_{d,q}$ is $\cG{3}{9}$-free and satisfies $|\p{\le 4}{F_{d,q}}|\le(5d+1)N+|E(\Gamma)|$. Since $N=q^{2d}$ and $|E(\Gamma)|\le4\binom{d}{2}q^{2d-1}$, Theorem~\ref{thm:highgirth} implies that
$$
\bes{9}
\ge
\frac{2dq^{2d}}{2\bigl((5d+1)q^{2d}+4\binom{d}{2}q^{2d-1}\bigr)}
=
\frac{dq}{(5d+1)q+4\binom{d}{2}}.
$$
Letting first $q\to\infty$ and then $d\to\infty$ gives $\bes{9}\ge1/5$.
\end{proof}

\section{Upper bounds}\label{se:upper}

For the upper bound, we will use the following lemma, proved by Delcourt and Postle~\cite{DelcourtPostle24}*{Theorem 2.1} for $r=3$ and by Shangguan~\cite{23sh}*{Lemma 5} for $r\ge 4$, which enables us to exclude smaller ``denser'' structures when considering the limit.

\begin{lemma}[\cite{23sh}*{Lemma 5}]\label{lem:ub}
    For all fixed $r\ge 3$ and $k\ge 3$,
    $$\limsup_{n\to \infty}\frac{f^{(r)}(n;rk-2k+2,k)}{n^2}\le \limsup_{n\to \infty}\frac{\ex(n;\cG{r}{k})}{n^2}.$$
\end{lemma}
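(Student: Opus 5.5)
The plan is to show that every $(rk-2k+2,k)$-free $r$-graph $H$ on $n$ vertices can be made $\cG{r}{k}$-free by deleting only $o(n^2)$ edges. The lemma follows at once from this: the graph that remains has at most $\ex(n;\cG{r}{k})$ edges, so $f^{(r)}(n;rk-2k+2,k)\le \ex(n;\cG{r}{k})+o(n^2)$. Since $H$ already contains no $k$-configuration, only the $\ell^-$-configurations with $2\le\ell\le k-1$ need to be destroyed. I would destroy them by deleting every edge of $H$ that lies in some such configuration. Call this set of edges $E'$; the goal is $|E'|=o(n^2)$.

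The main tool is an \emph{extension} step. Let $J\subseteq H$ be an $\ell^-$-configuration, so $v(J)\le (r-2)\ell+1$. Suppose we can find edges $e_{\ell+1},\dots,e_k$ of $H\setminus J$ such that each $e_j$ meets the vertex set built so far in at least two vertices. Then each new edge adds at most $r-2$ vertices, and the union has $k$ edges and at most $(r-2)k+1\le rk-2k+2$ vertices, i.e.\ it is a $k$-configuration. This contradicts the assumption on $H$. Hence every $\ell^-$-configuration of $H$ is ``stuck'': no edge outside it meets $V(J)$ in two or more vertices, and the same holds after any admissible chain of $k-\ell-1$ extensions. The same merging works for two such configurations. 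If $J_1,J_2$ share two vertices, their union has at most $(r-2)(\ell_1+\ell_2)$ vertices, which leaves a slack of $2$, so it can either be padded up to $k$ edges as above or it is already forbidden.

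Now suppose for contradiction that $|E'|\ge\varepsilon n^2$. By pigeonhole, there are a fixed $\ell<k$ and a fixed isomorphism type of $\ell^-$-configuration such that $\Omega(n^2)$ edges lie in copies of this type. Greedily choose a family $\mathcal J$ of $\Omega(n^2)$ edge-disjoint copies. Each copy $J$ spans $O(1)$ vertex pairs, and each $J$ is stuck, so no edge of $H\setminus J$ meets $V(J)$ in two vertices. I would count the pairs inside the sets $V(J)$, $J\in\mathcal J$. Two copies whose pair sets intersect merge into a forbidden configuration by the previous paragraph. So these pair sets are essentially disjoint, and they are also disjoint from the pair sets $\binom{e}{2}$ of the other edges of $H$ that do not lie in $J$. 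Each copy contributes strictly more pairs than its edges ``pay for'' in the count $|H|\cdot\binom r2\le\binom n2$: the deficit of one vertex forces edges inside $J$ to share pairs, so $J$ packs $\ell$ edges into fewer vertices than a linear arrangement would need. This extra packing should let the double count be tightened to show $|E'|=o(n^2)$.

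I expect this final quantitative step to be the main obstacle. Local stuckness alone need not give $o(n^2)$, because the stuck configurations could in principle occupy a positive fraction of the pairs. If the direct count fails, my first fallback is a removal-lemma argument. Consider the auxiliary graph on $V(H)$ whose edges are the pairs $i$-claimed by $H$ for some $i\le k$. A positive density of $\ell^-$-configurations yields, through the extension step, $\Omega(n^2)$ edge-disjoint ``near-$k$-configurations'' that each miss a single additional edge. I would then apply the graph removal lemma to the $2$-shadow to find two of these that can be glued along a claimed pair, which produces a $k$-configuration in $H$ and hence the desired contradiction.
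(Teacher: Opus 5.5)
The paper does not prove this lemma; it imports it from Delcourt--Postle ($r=3$) and Shangguan ($r\ge 4$), so I assess your proposal on its own. Your reduction (make $H$ $\cG{r}{k}$-free by deleting $o(n^2)$ edges) is the right one. However, the argument built on it has a genuine gap, and several intermediate claims are false as stated.

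(i) An $\ell^-$-configuration $J$ with $\ell\le k-2$ need not be stuck. A single edge meeting $V(J)$ in two vertices only yields an $(\ell+1)^-$-configuration, which $H$ may contain. Only the closure of $J$ under such additions is stuck, and it is reached after at most $k-1-\ell$ steps.

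(ii) Two configurations sharing two vertices need not merge into anything forbidden. There may be no edges to pad with. If $\ell_1+\ell_2>k$, you must select exactly $k$ edges, but deleting one edge can lower the excess $\sigma(S):=(r-2)|S|+2-v(S)$ by as much as $r-2$, so a slack of $2$ need not survive.

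(iii) The greedy step is a non sequitur, because copies of one type can share edges and one edge can lie in unboundedly many copies. For example, for $k\ge 7$, take a $K^{(3)}_4$ together with $t$ edges that meet it, and each other, only in one fixed vertex $c$ of the $K^{(3)}_4$. This $3$-graph is $(k+2,k)$-free, and all its $t+4$ edges lie in $4^-$- or $5^-$-configurations. Yet each such configuration uses an edge of the $K^{(3)}_4$, so at most four of them are pairwise edge-disjoint.

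The standard remedy is to delete, from the start, the edge set of a \emph{maximal} family $\mathcal J$ of pairwise edge-disjoint $\ell^-$-configurations ($2\le\ell\le k-1$). By maximality this kills all of them at a cost of at most $(k-1)|\mathcal J|$ edges. So it suffices to show $|\mathcal J|=o(n^2)$, and your pigeonhole over isomorphism types then becomes legitimate.

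The real missing idea is how to bound $|\mathcal J|$, and pair counting cannot supply it. The inequality $|H|\binom r2\le\binom n2$ holds only for linear $H$, and $(rk-2k+2,k)$-free graphs need not be linear. More fundamentally, $\Theta(n^2)$ copies of any fixed $\ell^-$-configuration on $v$ vertices, any two sharing at most one vertex, fit into $K_n$: place one on each block of a Steiner system $S(2,v,n)$. So no argument using only that the pair sets $\binom{V(J)}2$ are essentially disjoint can give $o(n^2)$.

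What excludes a dense family is a Ruzsa--Szemer\'edi-type phenomenon. A removal-lemma consequence such as the Erd\H{o}s--Frankl--R\"odl bound $f^{(v)}(n;3v-3,3)=o(n^2)$, applied to the $v$-graph $\{V(J):J\in\mathcal J\}$ of one fixed type, produces a few copies whose vertex sets overlap more than in a tree-like arrangement. Gluing an edge-disjoint piece $P$ onto $S$ along $s$ common vertices changes $\sigma$ by $\sigma(P)-2+s$. Hence a whole copy (with $\sigma\ge 1$) can be attached at one vertex at no cost, and every extra common vertex earns one unit of excess. This excess must then pay for taking only part of some copy, so as to reach exactly $k$ edges with $\sigma\ge 0$; this extraction is the delicate point.

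Your fallback mentions the removal lemma but supplies neither this mechanism nor the extraction. Its `near-$k$-configurations' also cannot be produced by the extension step, since the required edges need not exist.
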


By Lemma~\ref{lem:ub}, we are able to restrict our attention to $\cG{r}{k}$-free $r$-graphs when proving the upper bound on $\pi(r,k)$. To upper bound $|G|$ for any $n$-vertex $\cG{r}{k}$-free $r$-graph $G$, we employ a \emph{weighting method}, which is also used in the previous work~\cite{GlockJoosKimKuhnLichevPikhurko24,GKLPS24,ps25}.

In brief, the method proceeds as follows.
We begin with a trivial partition of $E(G)$ into single edges and iteratively merge parts using specific rules (which depend on $k$ and $r$). Then, we assign a set of weights that each final part (a subgraph $F\subseteq G$) attributes to some of the pairs in $\binom{V(F)}{2}$, so that every vertex pair receives total weight at most $1$. Consequently, the total weight assigned by the parts is at most $\binom{n}{2}$, and this enables us to derive an upper bound on $|G|$ by upper bounding the ratio of the number of edges to the sum of weights for each possible final part.

Once the merging rules are fixed, finding an optimal weight assignment amounts to solving a (possibly infinite) linear programming (LP) problem. Our computer code tries to generate and solve this LP. 
One difficulty of this method lies in designing the merging rules: they must be fine enough so that the LP gives a good bound and yet coarse enough so that the LP is finite and of manageable size.


\subsection{Further definitions}

Recall that for an $r$-graph $F$ and a pair $uv$, the set $\CI_F(uv)$ consists of all integers $j\ge 0$ such that $F$ has $j$ edges that together with $uv$ include at most $rj-2j+2$ vertices. Note that, by definition, $0$ always belongs to $\CI_F(uv)$, which is notationally convenient in the subsequent context.

Let $G$ be an arbitrary $r$-graph. When dealing with a partition $\C P$ of $E(G)$, we will view each element $F\in\C P$ as an $r$-graph whose vertex set is the union of the edges in~$F$. Let $A,B\subseteq \mathbb N$ be any (not necessarily disjoint) sets of positive integers. For two subgraphs $F,H\subseteq G$, if they are edge-disjoint and there is a pair $uv$ such that $A\subseteq \CI_F(uv)$ and $B\subseteq \CI_H(uv)$, then we say that $F$ and $H$ are \emph{$\OMerge{A}{B}$-mergeable (via $uv$)}. Note that this relation is not symmetric in $F$ and $H$. When the ordering of the two $r$-graphs does not matter, we use the shorthand \emph{$\Merge{A}{B}$-mergeable} to mean $\OMerge{A}{B}$-mergeable or $\OMerge{B}{A}$-mergeable. Similar to the claim notation, for singletons we may omit the brackets, for example $\Merge{i}{j}$ means $\Merge{\{i\}}{\{j\}}$.

For sets $A,B\subseteq\mathbb N\cup\{0\}$, write
$
A+B:=\{a+b:a\in A,\ b\in B\}
$
for their Minkowski sum. If $F$ and $H$ are $\Merge{A}{B}$-mergeable via $uv$, then
\begin{equation}\label{eq:AB}
A\cup B \subseteq \CI_{F\cup H}(uv)\quad \text{and}\quad A+B\subseteq \CI_{F\cup H}(uv).
\end{equation}

For a finite collection of merging rules
$
\C R=\{\Merge{A_1}{B_1},\dots,\Merge{A_s}{B_s}\},
$
we say that two edge-disjoint subgraphs are \emph{$\C R$-mergeable} if they are mergeable under at least one rule in $\C R$. Starting with the partition of $E(G)$ into single edges, iteratively and as long as possible, replace two distinct $\C R$-mergeable parts by their union. The final partition is denoted by $\C M'^G_{\C R}$ and we call its elements \emph{clusters}.

\newcommand{\AllClusters}[2]{\mathfrak F_{\C R}(#1,#2)}
\newcommand{\ExtIntClusters}[2]{\mathfrak F_{\C R}'(#1,#2)}

It will be convenient to consider not only the clusters appearing from a particular graph, but the full closure of the merging rules over all starting graphs.
Namely, we define the family $\AllClusters rk$ of isomorphism types of $r$-graphs as follows. Initially, it consists of a single edge. As long as this increases the family,  apply the following \emph{abstract $\C R$-merging}: take any $F,H\in \AllClusters rk$ (possibly $F=H$), put their copies $F'$ and $H'$ on a common vertex set so that they are edge-disjoint (but are allowed to share vertices), and add the $r$-graph $F'\cup H'$ to $\AllClusters{r}{k}$ if it is $\cG{r}{k}$-free, is an $\C R$-merging of $F'$ and $H'$, and is not in $\AllClusters{r}{k}$. (For brevity, when talking about membership in $\AllClusters{r}{k}$, we will always mean up to isomorphism.) Thus $\AllClusters{r}{k}$ is the minimal family that contains the single edge and is closed under abstract $\C R$-mergings. Clearly, for every $\cG{r}{k}$-free $r$-graph $G$, $\C M'^G_{\C R}$ is a sub-family of $\AllClusters{r}{k}$.

\subsection{The linear programming problem}
In this section, we formalise the weighting method and translate the problem of upper bounding $\pi(r,k)$ into a linear programming problem.

For an $r$-graph $F$ and a pair $xy$, define the \emph{truncated claim set}
\[
\CI_F^{<k}(xy):=\CI_F(xy)\cap\{0,1,\dots,k-1\}.
\]
Only these claims are relevant when deciding whether a collection of claim sets can sum to $k$.

Let $\C R$ be a collection of merging rules $\Merge{A}{B}$, that we can choose depending on $r$ and $k$. In our proofs, $\C R$ will contain only merging rules $\Merge{A}{B}$ with  $A,B\subseteq[k-1]$; in particular, $\C R$ is finite. We say that a sequence  $S_1,S_2,\dots,S_m$ (of not necessarily distinct sets) possesses the \emph{$\C R$-property} if they satisfy the following:
\begin{enumerate}[label=(\roman*)]
    \item $0\in S_i\subseteq\{0,1,\dots,k-1\}$ for every $i\in[m]$;
    \item for every rule $\Merge{A}{B}\in\C R$ and every ordered pair of distinct indices $i,j\in[m]$, we have $A\not\subseteq S_i$ or $B\not\subseteq S_j$ (or both); \label{it:R-property}
    \item $k\notin S_1+\cdots+S_m$. \label{it:sum-property}
\end{enumerate}
 This definition is motivated by the fact (that is checked inside the proof of Theorem~\ref{thm:lp-ub}) that if we fix a pair $xy$ in $m$ final clusters $F_1,\dots,F_m$ then the corresponding truncated claim sets $S_i:=\CI_{F_i}^{<k}(xy)$ for $i\in [m]$ (taken in any order but keeping their multiplicities if some are repeated) have the $\C R$-property.

We consider the following linear program where the unknown variables are $t$ and $w_S$, where the latter are		 called \emph{weights} and are indexed by the $2^{k-1}$ subsets $S\subseteq\{0,1,\dots,k-1\}$ with $0\in S$: maximise $t$ subject to
\begin{align}
& \sum_{xy\in\binom{V(F)}2}w_{\CI_F^{<k}(xy)}\ge t|F|,
    &\ & \text{for every }
    F\in\AllClusters{r}{k},\label{lp:F}\\
& \sum_{i=1}^{m}w_{S_i}\le1,
    && \text{for every sequence }
    S_1,\dots,S_m\text{ with the $\C R$-property},\label{lp:S1...Sl}\\
& w_S\in[0,1],
    && \text{for every set $S$ with }
    0\in S\subseteq\{0,1,\dots,k-1\}.\label{lp:S}
\end{align}

\begin{remark}\label{rem:finite-lp} 
Note that there may be infinitely many sequences with the $\C R$-property and thus infinitely many constraints in~\eqref{lp:S1...Sl}. However, we set $w_{S}:=0$ for every $S$ such that the constant sequence $S^{(k)}:=(S,\dots,S)$ of length $k$ has the $\C R$-property
and then remove any duplicate inequalities in~\eqref{lp:S1...Sl}. Then every remaining sequence $S_1,\dots,S_m$ contains every subset $S$ at most $k-1$ times and the number of inequivalent constraints in~\eqref{lp:S1...Sl} is finite. Also, it is easy to see that this step does not change the value of the LP: if $S^{(k)}$ has the $\C R$-property then so does every $S^{(m)}$ and thus $w_S=0$ in any feasible solution to the original LP.\end{remark}
	
The number of constraints in~\eqref{lp:F} may be infinite and the authors do not know any general reduction that replaces $\AllClusters{r}{k}$ with a finite family without affecting the value of the LP. When we ran computer experiments, if the current family became too large then we had to abandon the used set of rules $\C R$.

The next result shows that any feasible solution of the (possibly infinite) LP gives an upper bound on $\pi(r,k)$.

\begin{theorem}\label{thm:lp-ub}
Let $r\ge3$ and $k\ge3$, and let $\C R$ be a collection of merging rules $\Merge{A}{B}$ with $A, B \subseteq [k-1]$. If $t>0$ is a feasible value of the linear program with constraints \eqref{lp:F}--\eqref{lp:S}, then 
\[
\pi(r,k)\le\frac1{2t}.
\]
\end{theorem}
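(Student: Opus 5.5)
By Lemma~\ref{lem:ub}, it suffices to show that every $n$-vertex $\cG{r}{k}$-free $r$-graph $G$ satisfies $|G|\le \binom n2/t$. Then
\[
\limsup_{n\to\infty} \frac{\ex(n;\cG rk)}{n^2}\le \frac1{2t}.
\]
Fix such a $G$ and form the final partition $\C M'^G_{\C R}$ into clusters. Each cluster lies in $\AllClusters rk$: every merge performed inside $G$ produces a subgraph of $G$, which is $\cG rk$-free, so by induction each part is obtained by abstract $\C R$-merging. Constraint~\eqref{lp:F} then gives, for every cluster $F$,
\[
t|F|\le \sum_{xy\in\binom{V(F)}2} w_{\CI_F^{<k}(xy)}.
\]
Summing over clusters gives
\[
t|G|\le \sum_{xy\in\binom{V(G)}2}\ \sum_{F\ni x,y} w_{\CI_F^{<k}(xy)},
\]
where the inner sum is over clusters $F$ with $x,y\in V(F)$. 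It remains to show that each inner sum is at most $1$; then $t|G|\le\binom n2$.

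Fix a pair $xy$, and let $F_1,\dots,F_m$ be the clusters containing both $x$ and $y$. Set $S_i:=\CI_{F_i}^{<k}(xy)$. I will verify that $(S_1,\dots,S_m)$ has the $\C R$-property, after which~\eqref{lp:S1...Sl} finishes the proof; if $m=0$ there is nothing to show. Property~(i) is immediate, since $0\in\CI_F(xy)$ always. For~\ref{it:R-property}, suppose some rule $\Merge AB\in\C R$ has $A\subseteq S_i$ and $B\subseteq S_j$ with $i\ne j$. Since $A,B\subseteq[k-1]$, truncation loses nothing, so $A\subseteq\CI_{F_i}(xy)$ and $B\subseteq\CI_{F_j}(xy)$. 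Distinct clusters are edge-disjoint, so $F_i$ and $F_j$ would be $\C R$-mergeable via $xy$, contradicting the fact that the merging process stopped.

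For~\ref{it:sum-property}, which I expect to be the main point needing care, suppose $k=j_1+\dots+j_m$ with $j_i\in S_i$. For each $i$ with $j_i>0$, choose $j_i$ edges of $F_i$ which together with $\{x,y\}$ span at most $(r-2)j_i+2$ vertices. These edge sets come from disjoint clusters, so altogether we get $k$ distinct edges. Every edge set contains $x$ and $y$ in its vertex count, and these two vertices are shared, so the union has at most
\[
\sum_i\bigl((r-2)j_i+2\bigr)-2(p-1)=(r-2)k+2
\]
vertices, where $p$ is the number of nonzero $j_i$. This is a $k$-configuration in $G$. Any isolated-vertex issue is harmless, because having at most $rk-2k+2$ vertices is the condition, and including $x,y$ only overcounts. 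This contradicts $\cG rk$-freeness.

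So every sequence arising this way satisfies the $\C R$-property. Combining with~\eqref{lp:S1...Sl}, the inner sum for each pair is at most $1$. Finally, the reduction in Remark~\ref{rem:finite-lp} only removes redundant constraints, so any feasible $t$ for the original LP is valid, and the bound follows.
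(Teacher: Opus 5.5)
Your proposal is correct and follows essentially the same route as the paper. You reduce via Lemma~\ref{lem:ub} to a $\cG{r}{k}$-free $G$, sum constraint~\eqref{lp:F} over the final clusters, and bound each pair's total weight by the $\C R$-property: property~\ref{it:R-property} comes from maximality of the merging, and property~\ref{it:sum-property} from combining the witnesses along the common pair $xy$ into a $k$-configuration. Your extra details (the inductive check that clusters lie in $\AllClusters{r}{k}$ and the explicit count giving $(r-2)k+2$ vertices) only spell out steps the paper states briefly.
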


\begin{proof}
Suppose $G$ is an arbitrary $\cG{r}{k}$-free $n$-vertex $r$-graph, and let
$\C{M}'^G_{\C R}$
be its final cluster partition. Recall that $\C{M}'^G_{\C R} \subseteq \AllClusters{r}{k}$, and hence, up to isomorphism, each cluster $F \in \C{M}'^G_{\C R}$ appears in the linear program.

For every final cluster $F\in \C{M}'^G_{\C R}$ and every pair $xy\in\binom{V(F)}2$, define the \emph{weight} of $xy$ contributed by $F$ to be $w_{\CI_F^{<k}(xy)}$. We then define
\[
w(F):=\sum_{xy\in\binom{V(F)}2}w_{\CI_F^{<k}(xy)},
\qquad F\in \C{M}'^G_{\C R},
\]
and
\[
w(xy):=\sum_{\substack{F\in \C{M}'^G_{\C R}\\xy\in\binom{V(F)}2}}w_{\CI_F^{<k}(xy)},
\qquad xy\in\binom{V(G)}2.
\]

We claim that
\begin{align}
    w(xy)&\le1 &&\text{for every }xy\in\binom{V(G)}2, \label{eq:one}\\
    w(F)&\ge t|F| &&\text{for every }F\in \C{M}'^G_{\C R}. \label{eq:weight}
\end{align}
Indeed,~\eqref{eq:weight} follows directly from~\eqref{lp:F}.

To prove~\eqref{eq:one}, fix a pair $xy\in\binom{V(G)}2$ and enumerate all final clusters containing $xy$ as $F_1,\dots,F_m$. If $m=0$, then $w(xy)=0$, so assume $m\ge1$. Set
\[
S_i:=\CI_{F_i}^{<k}(xy),\qquad i\in[m].
\]
We show that $S_1,\dots,S_m$ has the $\C R$-property. Clearly $0\in S_i$ for every $i$, so condition (i) is true. Moreover, no two distinct final clusters are $\C R$-mergeable, so condition~\ref{it:R-property} follows from the maximality of the merging procedure.

It remains to verify condition~\ref{it:sum-property}. Suppose for a contradiction that $k\in S_1+\cdots+S_m$, and choose $s_i\in S_i$ such that $\sum_{i=1}^m s_i=k$. For every $i$ with $s_i>0$, the cluster $F_i$ contains $s_i$ distinct edges which, together with $xy$, use at most 
$(r-2)s_i+2$ vertices. Since the clusters are edge-disjoint and all these witnesses contain the common pair $xy$, their union consists of $k$ distinct edges on at most
\[
2+\sum_{i=1}^m(r-2)s_i=(r-2)k+2=rk-2k+2
\]
vertices. This is a $k$-configuration in $G$, a contradiction. Thus $S_1,\dots,S_m$ has the $\C R$-property and the constraints in~\eqref{lp:S1...Sl} give
$
w(xy)=\sum_{i=1}^m w_{S_i}\le1.
$
This proves~\eqref{eq:one}.

Consequently,
\[
|G|=\sum_{F\in \C{M}'^G_{\C R}}|F|
\le t^{-1}\sum_{F\in \C{M}'^G_{\C R}}w(F)
=t^{-1}\sum_{xy\in\binom{V(G)}2}w(xy)
\le t^{-1}\binom n2.
\]
Since $G$ was arbitrary, Lemma~\ref{lem:ub} and $\binom n2 = (1+o(1)) n^2/2$ imply $\pi(r,k)\le1/(2t)$.
\end{proof}

\subsection{Proofs of the claimed upper bounds}

In brief, we derived the upper bounds of Theorems~\ref{thm:8-edge-ub} and~\ref{thm:9-edge} from Theorem~\ref{thm:lp-ub} by writing SageMath code that, for a given set of merging rules $\C R$, tries to generate the LP (reduced as in Remark~\ref{rem:finite-lp}) and solve it. When the size of $\AllClusters{r}{k}$ became too large for computer calculations, we would change $\C R$ to allow fewer mergings and try again. On the other hand, when we wanted to improve the current bound, we would enlarge~$\C R$. These (non-exhaustive) experiments with possible merging rules led to the presented bounds.

\begin{proof}[Proof of Theorem~\ref{thm:8-edge-ub}]
We apply Theorem~\ref{thm:lp-ub} with
\[
r=3,\qquad k=8,\qquad
\C R_8=\{\Merge{1}{1},\Merge{1}{2},\Merge{2}{2}\}.
\]
The procedure described in Appendix~\ref{se:appendix-computation} generates a finite family
$
\ExtIntClusters{3}8 \supseteq \AllClusters{3}{8}.
$
We impose the first family of LP constraints on all members of $\ExtIntClusters{3}8$; this can only restrict the feasible set of the LP in Theorem~\ref{thm:lp-ub}. Solving the resulting finite linear program using exact rational arithmetic gives a feasible solution with
$
t=\frac{13272}{5053}.
$
Therefore, Theorem~\ref{thm:lp-ub} gives
\[
\bes{8}\le\frac1{2t}=\frac{5053}{26544}.
\]
The computational details and the exact certificate are described in Appendix~\ref{se:appendix-computation}.
\end{proof}

\begin{proof}[Proof of the upper bound in Theorem~\ref{thm:9-edge}]
We apply Theorem~\ref{thm:lp-ub} with
\[
r=3,\qquad k=9,\qquad
\C R_9=\{\Merge{1}{1},\Merge{1}{2}\}.
\]
The procedure described in Appendix~\ref{se:appendix-computation} generates a finite family
$
\ExtIntClusters{3}9 \supseteq \AllClusters{3}{9}.
$
The resulting finite linear program has a feasible solution with
$
t=\frac52.
$
Hence Theorem~\ref{thm:lp-ub} gives
\[
\bes{9}\le\frac1{2t}=\frac15.
\]
\end{proof}

\section{Concluding remarks}

Whether the universal closure is finite depends on the choice of merging rules. For example, adding the rule $\Merge{1}{3}$ in the (still open) case when $k=8$  produces arbitrarily large clusters. Let $H$ be the $3$-graph which has vertex set $[5]$ with edges $123,124,345$. The pair $15$ is $3$-claimed by $H$, while $45$ is $1$-claimed. By taking a chain of copies of $H$ and identifying the pair $15$ in one copy with the pair $45$ in the next, one obtains arbitrarily large $\cG{3}{8}$-free clusters. Thus a finite full-closure enumeration is not available for this stronger rule set.

\begin{figure}[htbp]
\centering
\begin{tikzpicture}[
    font=\small,
    vertex/.style={circle,fill=black,inner sep=1.25pt,outer sep=0pt},
    hedge/.style={draw=black!65,fill=black!10,line width=0.45pt}
]
    \coordinate (h1) at (0,1.3);
    \coordinate (h2) at (0,0.45);
    \coordinate (h3) at (0.95,0);
    \coordinate (h4) at (-0.95,0);
    \coordinate (h5) at (0,-1.3);
    \path[hedge] (h1)--(h2)--(h3)--cycle;
    \path[hedge] (h1)--(h2)--(h4)--cycle;
    \path[hedge] (h3)--(h4)--(h5)--cycle;
    \draw[hedge] (h4)--(h5);
    \node[vertex,label=above:{$1$}] at (h1) {};
    \node[vertex,label=right:{$2$}] at (h2) {};
    \node[vertex,label=right:{$3$}] at (h3) {};
    \node[vertex,label=left:{$4$}] at (h4) {};
    \node[vertex,label=below:{$5$}] at (h5) {};
    \node at (0,-2.35) {The graph $H$};

    \coordinate (d1) at (3.05,0);
    \coordinate (d2) at (4.95,0);
    \coordinate (d3) at (4,1.3);
    \coordinate (d4) at (4,-1.3);
    \path[hedge] (d1)--(d2)--(d3)--cycle;
    \path[hedge] (d1)--(d2)--(d4)--cycle;
    \draw[hedge] (d1)--(d2);
    \node[vertex,label=left:{$1$}] at (d1) {};
    \node[vertex,label=right:{$2$}] at (d2) {};
    \node[vertex,label=above:{$3$}] at (d3) {};
    \node[vertex,label=below:{$4$}] at (d4) {};
    \node at (4,-2.35) {The graph $D$};
\end{tikzpicture}
\end{figure}

We also experimented with adding $\Merge{2}{3}$ to $\C R_8$. The resulting enumeration grows rapidly. We believe that the number of non-isomorphic clusters is finite, but from computational experiments it seems astronomical. The partial enumeration contained clusters with more than 100 vertices. On the portion generated in these experiments, the value $t=8/3$ was feasible. The value $8/3$ is naturally associated with packings of the above $H$-configuration where $3$-claimed pairs are shared by many copies.

For $k=9$, let $D$ be the diamond graph, which is the two-edge $3$-graph on vertex set $[4]$ with edges $123$ and $124$. The pair $34$ is $2$-claimed by $D$. Chaining copies of $D$ along these $2$-claimed pairs gives arbitrarily large $\cG{3}{9}$-free clusters, hence adding the $\Merge{2}{2}$ rule would produce an infinite family. This is the configuration underlying the value $t=5/2$ in the present weighting scheme and matches the lower bound construction.

\section*{Acknowledgements}

Levente Bodn\'ar, Oleg Pikhurko and Shumin Sun were supported by ERC Advanced Grant 101020255. Oleg Pikhurko was also supported 
in part by a grant of access to OpenAI models through the ChatGPT for Academic Researchers program.
Yan Wang was supported by the National Key R\&D Program of China under Grant No.~2022YFA1006400 and the National Natural Science Foundation of China under Grant No.~12571376. Part of this research was done during Yan Wang's visit to Warwick that was supported by the SJTU--Warwick Joint Seed Fund.

\paragraph{Declaration of generative AI use.}
During the development of the lower-bound construction, the authors used ChatGPT to assist with the finite case analysis and the associated computations. In particular, the tool was used to help organise and examine the relevant cases, and to generate a Python program that computes $\nu(c,h-c)$ for the relevant values of $c$ and $3\le h\le 9$. Generative AI was also used to write some routine parts of the SageMath code for the upper bound. The authors  take full responsibility for the content of the manuscript.

\bibliographystyle{abbrv}
\bibliography{ref}

\appendix

\section{Computational details}\label{se:appendix-computation}

This appendix describes the exhaustive generation of the finite cluster families used in the upper-bound proofs and the finite linear programs obtained from Constraints~\eqref{lp:F}--\eqref{lp:S}. The full implementation and exact certificate data are provided separately.

\subsection{Generating $\ExtIntClusters{3}k$}\label{subsec:appendix-enumeration}

The program generates a family $\ExtIntClusters{3}k$ beginning with a single edge and closing under two operations.

\begin{enumerate}[label=(\roman*)]
	\item \emph{External gluing.} Two already generated $3$-graphs are taken as disjoint copies and a selected pair of the first is identified with a selected pair of the second. Thus the two copies initially meet exactly in the gluing pair. The selected pairs must satisfy a rule in $\C R$, and the union is retained only if it is $\cG{3}{k}$-free. Both orientations of the pair are considered.
	
	\item \emph{Internal gluing.} If two vertices $u,v$ of a generated graph satisfy $\CI_F^{<k}(uv)=\{0\}$, they may be identified, provided that the resulting simple $3$-graph remains $\cG{3}{k}$-free.
\end{enumerate}

Recall that two edge-disjoint subgraphs $F,H$ are mergeable via the pair $uv$ if there is a rule $\Merge{A}{B} \in \C R$ such that $A \subseteq \CI_F(uv)$ and $B \subseteq \CI_H(uv)$, or vice versa. In the partition, these subgraphs are replaced by their union $F \cup H$. We store only one representative of each isomorphism type during generation. Since $F$ and $H$ may share vertices beyond the merging pair, an abstract merging need not be isomorphic to an external gluing of their stored representatives. We account for this by first gluing disjoint copies externally along the merging pair and then identifying the remaining common vertices internally.

An external gluing of two $\cG{3}{k}$-free graphs sharing exactly two vertices cannot create an $\ell^-$-configuration using edges from both sides with $\ell < k$. Indeed, if such a configuration uses $a$ edges from one side and $b$ from the other, then the two portions use at least $a+2$ and $b+2$ vertices, respectively, and their union uses at least $a+b+2$ vertices. Consequently, for an external gluing it is enough to test whether a $k$-configuration is created, which can be easily guaranteed by asserting that no pair $f_u, f_v$ and $h_u, h_v$ is glued with $k \in \CI_F^{<k}(f_u f_v) + \CI_H^{<k}(h_u h_v)$.

A similar conclusion holds for an internal gluing: any newly created smaller $\ell^-$-configuration would lift to an $\ell$-claim of the identified pair before the identification, hence internal gluings are only tested for pairs $uv \in \binom{V(F)}2$ with $\CI_F^{<k}(uv) = \{0\}$.

We record a simple observation which is used repeatedly by the implementation. Let $F$ be a $\cG{3}{k}$-free $3$-graph. If $1\le i<k$ and $F$ $i$-claims a pair $xy$, then the $i$ witnessing edges together with $xy$ use exactly $i+2$ vertices: fewer vertices would give an $i^-$-configuration. Moreover, the induced subgraph on these $i+2$ vertices contains exactly $i$ edges, since an additional edge would give either an $(i+1)^-$-configuration or, when $i+1=k$, a $k$-configuration. We call such a witness an \emph{$i$-tight set}. Thus, for every graph occurring in the enumeration, all truncated claim sets can be recovered from its tight sets. These tight sets accelerate the generation. The tight sets of an externally glued pair can be calculated quickly from the tight sets of the components.

\lstdefinestyle{pseudocode}{
  basicstyle=\ttfamily\small,
  frame=single,
  numbers=none,
  columns=fullflexible,
  keepspaces=true,
  showstringspaces=false,
  breaklines=true,
  xleftmargin=1em,
  framexleftmargin=.5em
}

\noindent\begin{minipage}{\linewidth}
\begin{lstlisting}[style=pseudocode,caption={Enumeration of $\ExtIntClusters{3}k$.}]
Input: k and the rules Rgen
Output: a family C of cluster isomorphism types

C <- {one-edge graph}
Q <- queue containing the one-edge graph

while Q is not empty:
    remove F from Q

    for every H in C and every rule rho in Rgen:
        for every eligible pair fu,fv of F and eligible ordered pair hu,hv of H:
            J <- F sqcup H / (fu=hu, fv=hv)
            if J is admissible and its isomorphism type is new:
                add J to C and Q

    for every pair u,v of F with truncated claim set {0}:
        J <- F/(u=v)
        if J is admissible and its isomorphism type is new:
            add J to C and Q

return C
\end{lstlisting}
\end{minipage}

For comparing 3-graphs, the algorithm uses Nauty \cite{nauty}.

\subsection{The linear program and data}\label{subsec:appendix-lp}


After the reduction in Remark~\ref{rem:finite-lp}, all retained claim sets have bounded multiplicity. Since all weights are nonnegative, it is enough to retain admissible multiplicity vectors which are maximal coordinatewise. Another set of linear constraints comes from each generated graph $F \in \ExtIntClusters{3}k$.
The LP is solved over the rational field using the PPL backend in SageMath \cite{sage}, so all reported objective values and weights are exact rationals.

The computation gives the following data.
\begin{center}
\begin{tabular}{lcc}
 & $k=8$ & $k=9$\\
\midrule
Found optimal $t$ & $\dfrac{13272}{5053}$ & $\dfrac52$\\[4mm]
$\left| \ExtIntClusters{3}k \right|$ & $5683$ & $9224$\\[1mm]
$\max_{F \in \ExtIntClusters{3}k} v(F)$ & $21$ & $16$\\[1mm]
$\# F$ with $w(F)=t|F|$ & $22$ & $50$\\[1mm]
$\# S$ for which $S^{(k)}$ lacks the $\C R$-property & $116$ & $224$\\

\bottomrule
\end{tabular}
\end{center}

The complete code, exact weight vectors and the \emph{tight} $3$-graphs (that is, $F\in\ExtIntClusters{3}{k}$ with $w(F)=t|F|$) can be found in the ancillary folder on arXiv. 

\end{document}